\documentclass[12pt,twoside, a4paper]{amsart}

\usepackage[x11names]{xcolor}
\usepackage[linkcolor=Blue2,citecolor=red,colorlinks]{hyperref}
\usepackage[capitalize,nameinlink]{cleveref}
\usepackage[linkcolor=Blue2,citecolor=red,colorlinks]{hyperref}
\usepackage{color, bm, amscd, tikz-cd, amssymb}
\usepackage{color, bm, amscd, tikz-cd}

\usepackage{comment}
\usepackage{centernot}
\usepackage{mathtools}
\usepackage{stmaryrd}
\makeatletter
\newcommand{\xMapsto}[2][]{\ext@arrow 0599{\Mapstofill@}{#1}{#2}}
\def\Mapstofill@{\arrowfill@{\Mapstochar\Relbar}\Relbar\Rightarrow}
\makeatother

\newcommand{\cA}{{\mathcal A}}
\newcommand{\cB}{{\mathcal B}}

\newcommand{\cD}{{\mathcal D}}
\newcommand{\cE}{{\mathcal E}}

\newcommand{\cH}{{\mathcal H}}
\newcommand{\cI}{{\mathcal I}}

\newcommand{\cK}{{\mathcal K}}

\newcommand{\cQ}{{\mathcal Q}}

\newcommand{\cS}{{\mathcal S}}

\newcommand{\cX}{{\mathcal X}}

\newcommand{\cZ}{{\mathcal Z}}

\newcommand{\bT}{{\mathbb{T}}}

\newcommand{\bD}{{\mathbb D}}
\newcommand{\bC}{{\mathbb C}}

\newtheorem{thm}{Theorem}[section]

\newtheorem{lemma}[thm]{Lemma}

\newtheorem{proposition}[thm]{Proposition}

\theoremstyle{definition}

\newtheorem{definition}[thm]{Definition}
\newtheorem{remark}[thm]{Remark}
\newtheorem{problem}[thm]{Problem}

\newtheorem{example}[thm]{Example}
\numberwithin{equation}{section}

\begin{document}
\title[Joint Spectrum]{Cayley--Hamilton tuples: an interplay between algebraic varieties and joint spectra}

\author[Das]{B. Krishna Das}
\address[Das]{Department of Mathematics, Indian Institute of Technology Bombay, Powai, Mumbai, 400076, India}
\email{dasb@math.iitb.ac.in, bata436@gmail.com}

\author[Kumar]{Poornendu Kumar}
\address[Kumar]{Department of Mathematics, Indian Institute of Science Bangalore, Karnataka, 560012, India. }
\email{poornendukumar@gmail.com}

\author[Sau]{Haripada Sau}
\address[Sau]{Department of Mathematics, Indian Institute of Science Education and Research, Dr.\ Homi Bhabha Road, Pashan, Pune, Maharashtra 411008, India.}
\email{haripadasau215@gmail.com, hsau@iiserpune.ac.in}

\subjclass[2020]{14M12, 47A13}
\keywords{Algebraic varieties, Cayley--Hamilton tuples, Distinguished Varieties, Spectra, Annihilating ideals, Polydisk}

\begin{abstract}
We introduce the notion of \textit{Cayley--Hamilton tuples}: these are commuting operator tuples that are annihilated by a non-zero polynomial and such that its Taylor joint spectrum coincides with the algebraic variety determined by its annihilating ideal. Commuting matrix tuples are Cayley--Hamilton tuples. We provide two families of Cayley--Hamilton tuples in the infinite dimensional setting with additional details. What arises as a by-product is a concrete characterization of distinguished varieties in the polydisk in terms of Taylor joint spectrum of commuting isometries. These varieties have been of interest in various fields of mathematics over the last two decades. The Taylor and Waelbroeck joint spectrum of a Cayley--Hamilton tuple are shown to be the same. It is also shown that the support of the annihilating ideal of a Cayley--Hamilton tuple is the same as its joint spectrum. As an application, we deduce an algebraic characterization of bi-variate polynomials whose zero set intersected with the closed bidisk is the joint spectrum of a commuting isometric pair.
\end{abstract}

\maketitle

\section{Background and the main results}
\subsection{Background}A fundamental fact in linear algebra is the Cayley--Hamilton Theorem: \textit{every square matrix $T$ is annihilated by the polynomial $p(z)=\det(T-zI)$ and that $\sigma(T)=\cZ(p)$, the zero set of $p$.} For operators acting on an infinite dimensional Hilbert space, such a result cannot be expected because the zero set of a single-variable polynomial is discrete while the spectrum, in general, is not. There is, however, a class of Hilbert space operators for which a Cayley--Hamilton type result holds. This is the so-called $C_0$ class consisting of those completely non-unitary (cnu) contractive operators $T$ for which there is a bounded analytic function $f$ on the disk $\bD$ such that $f(T)=0$. Sz.-Nagy's dilation theorem plays a pivotal role here ensuring that $f(T)$ is well-defined. Let us denote by $H^\infty(\bD)$ the algebra of bounded analytic functions on $\bD$. For the next component of the Cayley-Hamilton Theorem for $T\in C_0$, one considers the following ideal in $H^\infty(\bD)$ 
$$
\operatorname{Ann}_{H^\infty}(T) :=\{f\in H^\infty(\bD): f(T)=0\}.
$$The non-triviality of the ideal coupled with the celebrated Beurling Theorem gives the following
\begin{align}\label{Ann}
\operatorname{Ann}_{H^\infty}(T) = \varphi H^\infty(\mathbb{D})
\end{align} for some inner function $\varphi$ on $\bD$. The inner function $\varphi$ is often referred to as the \textit{minimal function} for $T$ because if $f(T)=0$ for some $f\in H^{\infty}(\bD)$, then $\varphi$ must divide $f$. Note that the minimal function is unique up to multiplication by a unimodular constant. There is a gratifying converse to this.

 For $d\geq 1$ and a Hilbert space $\cE$, we shall denote by $H^2_{\bD^d}(\cE)$ the Hardy space of $\cE$-valued analytic functions so that the coefficients of its Taylor series expansion around the origin is square summable. This is with the exception that we shall have $H^2_{\bD^d}$ instead of $H^2_{\bD^d}(\bC)$, $H^2(\cE)$ instead of $H^2_{\bD}(\cE)$, and $H^2$ instead of $H^2_{\bD}(\bC)$. The converse is that for a non-constant inner function $\varphi$, the operator defined on $\cK_\varphi:=H^2\ominus \varphi H^2$ by
\begin{align}\label{Compress}
T := P_{\mathcal K_\varphi} M_z \big|_{\mathcal K_\varphi}
\end{align}is in $C_0$ with $\varphi$ as its minimal function.  

Moreover, for an inner function $\varphi$, let $\mbox{supp}(\varphi)$ (read support of $\varphi$) denote the union of the set of points in $\bD$ where $\varphi$ vanishes and the complement in $\bT$ of the arcs of $\bT$ where $\varphi$ has an analytic extension. It is remarkable that if $\varphi_T$ is the minimal function for $T$ in $C_0$, then one obtains a result analogous to the second component of the Cayley--Hamilton Theorem as follows:
\begin{align}\label{TheSupp}
\sigma(T)= \mbox{supp}(\varphi_T).
\end{align}We refer the readers to the books \cite{Hari} and \cite[Sections III.4 and III.5]{Nagy-Foias} for proofs of the results stated here and more in this direction.

\subsection{The problem of interest}The purpose of this paper is to  establish infinite-dimensional, multivariate analogues of the classical results stated above. The first obstruction one faces is the absence of an $H^\infty$-functional calculus for tuples of commuting contractions. Such absence is owing to the fact that the Sz.-Nagy's dilation theorem does not extend to higher variables.  Sz.-Nagy's dilation theorem does extend to two variables (thanks to And\^o \cite{Ando}) and in this case an $H^\infty$-functional calculus is known for commuting pairs of cnu contractions; see the papers \cite{BDO} and \cite{Cooper}. However, the theory could not be extended to the full glory of the Sz.-Nagy--Foias theory for various reasons; the absence of an analogue of the Beurling Theorem for $H^2_{\bD^2}$ is one. Considering all these roadblocks, we resort to operators that are annihilated by polynomials. This prompts the following definition.
\begin{definition}\label{D:algebraic}
An operator tuple $\bm T$ is said to be \textit{algebraic} if there is a non-zero polynomial $p$ such that $p(\bm T)=0$.
\end{definition}

In higher variables, there is a choice of the notion of spectrum. We choose to work with the Taylor spectrum, which is briefly discussed in \S \ref{S:Prelim}. The choice of Taylor spectrum is inspired by its rich properties discovered by Taylor in \cite{Taylor, Taylor2}. We shall also have use of the Waelbroeck spectrum defined originally in \cite{W}.
A good source for a comprehensive discussion of various notions of joint spectrum is the survey by Curto \cite{Cur}.
It is well-known that for commuting matrices $\bm T$, the joint spectrum constitutes of the joint eigenvalues. 
For tuples of operators acting on an infinite dimensional Hilbert space, the set of joint eigenvalues can in general be empty and the joint spectrum remains an obscure object other than that it is always nonempty and compact. 
As a simple illustration of its inherent complexity, recall that as a consequence of the Wold decomposition \cite{Wold}, the spectrum of a single isometry is either a subset of the unit circle $\mathbb{T}$ or the entire closed unit disk $\overline{\bD}$. In sharp contrast, the Taylor joint spectrum of even a pair of commuting isometries is sufficiently intricate to merit independent study; see for example the paper \cite{BRV} where the Taylor joint spectrum is computed for very special classes of commuting isometric pairs. Indeed, the problem of computing the joint spectrum is notoriously difficult, and despite sustained interest, it remains far from being fully understood. To mention a few representative works, we refer to \cite{GimenezPAMS2001, GimenezThesis} for a computation of the Taylor joint spectrum of composition operators, to \cite{BP} that characterizes the Taylor spectrum of pairs of isometries via diagrammatic methods, to \cite{ABLS} that determines the spectrum for pairs of Clark unitary operators, and to the papers \cite{CT}, \cite{DEHS} that studied the spectrum of commuting tuples associated with kernels satisfying the complete Nevanlinna–Pick property on the open unit ball.

Let us also mention that various notions of joint spectrum have been introduced with the aim of establishing connections with algebraic varieties in projective spaces, as well as in affine spaces -- see \cite{ST, SYZ, Yang}. See also \cite{KS}, where a new notion of eigenvalues, depending on the operator space structure of the ambient domain, is introduced to study the zero sets of holomorphic functions in the Schur--Agler class over the unit polydisk, as well as those in the unit ball of the multiplier algebra of the Drury--Arveson space.

The aim of this paper is not to just compute the joint spectrum but rather it revolves around the following problem inspired by the classical Cayley--Hamilton Theorem. 

\begin{problem}\label{P:TheProb} For which commuting contractive operator tuple $\bm T$ does there exist a collection $\cI$ of nonzero polynomials in $\bC[\bm z]$ so that
\begin{enumerate}
\item[(i)] $p(\bm T)=0$ for each $p\in\cI$; and
\item[(ii)] the Taylor joint spectrum of $\bm T$ is given by $\cZ(\cI)\cap\overline{\bD}^d$?
\end{enumerate}
\end{problem}
The Taylor joint spectrum of a commuting operator tuple $\bm T$ will throughout be denoted by $\sigma_T(\bm T)$.
As a consequence of the polynomial mapping theorem, it follows that condition (i) in Problem \ref{P:TheProb} implies a weaker version of (ii), viz., the Taylor joint spectrum of $\bm T$ must be contained in $\cZ(\cI)\cap\overline{\bD}^d$. Thus, condition (ii) ensures that the size of the collection $\cI$ is big enough for this containment to be an equality. For an algebraic tuple, a natural choice for $\cI$ is the annihilating ideal in $\bC[\bm z]$:
$$
\operatorname{Ann}(\bm T)=\{p\in\bC[\bm z]: p(\bm T)=0\}.
$$
It follows from the polynomial mapping theorem again that if Problem \ref{P:TheProb} is solved for $\bm T$ and any collection $\cI$ of polynomials, then condition (ii) (and (i) of course) holds for $\operatorname{Ann}(\bm T)$ as well. For this reason, we shall be concerned with finding a collection $\cI$ of annihilating polynomials so that (i) and (ii) hold instead of finding the annihilating ideal for an algebraic tuple $\bm T$. Let us introduce the following terminology.
\begin{definition}\label{D:C-H}
We say that a commuting tuple $\bm T$ of Hilbert space operators is a \textit{Cayley--Hamilton tuple} if Problem \ref{P:TheProb} can be solved for it in the affirmative.
\end{definition}It is easy to see that the class of Cayley--Hamilton tuples is closed under adjoint, direct sum and unitary equivalence. We shall provide three classes of Cayley--Hamilton tuples in this paper. We proceed with a number of concrete non-examples for further illustration. We shall have use of the following terminology.
\begin{definition}\label{D:AlgVar}
We say that a subset of the closed polydisk $\overline{\bD}^d$ is an \textit{algebraic variety} if it coincides with the common zero set of a collection of non-zero polynomials in $d$ variables within $\overline{\bD}^d$.
\end{definition}
Thus, condition (ii) of Problem \ref{P:TheProb} asks that the joint spectrum of the algebraic tuple $\bm T$ be an algebraic variety.  Let us note that for an algebraic tuple $\bm T$, the joint spectrum is not necessarily an algebraic variety. In fact, this is a rarity for algebraic tuples on an infinite dimensional Hilbert space.
\begin{example}
For a quick example, let $U$ be a unitary operator with infinitely many points in $\sigma(U)$. While the pair $(U,U)$ is algebraic, its joint spectrum is a proper closed subset of $\{(z,w)\in\mathbb C^2: z=w\}$ and thus fails to be an algebraic variety, as any proper Zariski-closed subset of an affine line must be finite.
\end{example}

It should also be noted that conversely, there is a commuting contractive pair whose Taylor joint spectrum is an algebraic variety but the pair is not even annihilated by any (admissible) analytic function.
\begin{example}\label{E:Volt}
Let $L^2$ be the standard Hilbert space of square integrable functions on the unit circle with respect to the arc-length measure. Consider the Hilbert space $H^2\otimes L^2$ and acting on it the commuting pair given by
$$
(T_1,T_2)=(M_z\otimes I_{L^2},I_{H^2}\otimes V)
$$where $V$ is the Volterra operator. It is known that $\sigma(V)=\{0\}$ and $\sigma_T(T_1,T_2)=\sigma(M_z)\times \sigma(V)=\overline{\bD}\times \{0\}$. Thus, its joint spectrum is an algebraic variety with the corresponding ideal being $<z_2>$, the ideal generated by $z_2$. We shall argue that $(T_1,T_2)$ cannot be annihilated by any function that is analytic in a neighborhood of the closed bidisk $\overline{\bD}^2$. Indeed, let $f$ be such a function. Write $f(z_1,z_2)=g(z_2) + z_1 h(z_1,z_2)$. It can be argued that the power series of both $g$ and $h$ around the origin is absolutely convergent in a neighborhood of $\overline{\bD}$ and $\overline{\bD}^2$, respectively. Let $u,v$ be arbitrary elements in $L^2$. Note that
\begin{align*}
\langle  f(T_1,T_2)(1\otimes u), (1\otimes v)\rangle_{H^2\otimes L^2}
=\langle  g(V) u, v \rangle_{L^2}.
\end{align*}Since $u,v$ are arbitrary in $L^2$, $f(T_1,T_2)=0$ implies $g(V)=0$. So $g$ must vanish in $\sigma(V)=\{0\}$. Writing $g(z_2)=z_2^Ng_1(z_2)$ for some analytic function $g_1$ not vanishing at $0$, we are left with $V^N=0$. It is well known that the Volterra operator is not nilpotent.

\end{example}

Our quest of understanding algebraic operator tuples or pairs in particular began in \cite{DasSauPAMS} and was inspired by the work of Agler, Knese, and McCarthy \cite{AKM} which initiates the study of algebraic isometric pairs.
\subsection{The main results}
We briefly discuss in turn seven main results of the paper; the first four results below are concerning the Cayley--Hamilton tuples.
\begin{enumerate}
\item[(\textbf{A})] It appears that a solution to Problem \ref{P:TheProb} is unknown even in the case of a commuting matrix tuple. While the papers \cite{Phil} and \cite{AS} focus on finding the polynomials that annihilate a given commuting matrix tuple, there appears to be no literature that extends the analysis to finding a solution to the more complete Problem \ref{P:TheProb}. We show that every commuting matrix tuple is a Cayley--Hamilton tuple; See \cref{Thm:CayleyHamilton} below. Indeed, for a matrix tuple $\bm T=(T_1,T_2,\dots, T_d)$, a collection $\cI$ of polynomials that solve Problem \ref{P:TheProb} for $\bm T$ is given by $\cI=\{p_\alpha:\alpha\in\bC^d\}$ where
$$
p_{\bm\alpha}(\bm z)=\det\Big( \alpha_1(T_1-z_1I)+\alpha_2(T_2-z_2I)+\cdots+\alpha_d(T_d-z_dI)\Big).
$$Moreover the collection $\cI$ can be chosen to be finite.
\item[(\textbf{B})] Let $\varphi_1, \dots, \varphi_{k}$ be contractive $N\times N$ matrix-valued rational functions with poles away from the closed polydisk $\overline{\bD}^d$ such that 
$$
\left[\varphi_i(\bm z), \varphi_j(\bm z)\right]:=\varphi_i(\bm z) \varphi_j(\bm z)-\varphi_j(\bm z)\varphi_i(\bm z)=0 \quad \text{ for all, } i, j \text { and } \bm z\in\overline{\bD}^d.
    $$ Then the tuple of Toeplitz operators $(M_{ z_1}, \dots, M_{z_d}, M_{\varphi_1}, \dots, M_{\varphi_k})$ acting on $H^2_{\bD^d}(\bC^N)$ is a Cayley--Hamilton tuple. See \cref{Poly1} below for further details. In this case also we argue that the collection $\cI$ can be taken to be finite.
\item[(\textbf{C})] 
Every commuting isometric tuple $\bm V=(V_1,V_2,\dots,V_d)$ such that with $V=V_1V_2\cdots V_d$, $V^{*n}\to0$ as $n\to\infty$ and $\dim\operatorname{Ran}(I-VV^*)<\infty$ is a Cayley--Hamilton tuple.
Moreover, the collection $\cI$ of polynomials that  solves Problem \ref{P:TheProb} for $\bm V$ is explicitly found in \cref{T:TheLastPiece} below. A functional model for commuting isometries due to Berger, Coburn and Lebow \cite{BCL}, briefly discussed in \S \ref{Pr1}, has proved indispensable. Indeed, it would not have been possible to find the collection $\cI$ explicitly if it were not for this result.
\item[(\textbf{D})] Let $\sigma(\cdot)$ be any notion of joint spectrum that enjoys the polynomial mapping theorem and that always contains the Taylor spectrum, i.e., for any polynomial $p$ and any commuting operator tuple $\bm A$, we have
$$
p(\sigma(\bm A))=\sigma(p(\bm A))\quad\mbox{and}\quad
\sigma_T(\bm A)\subset \sigma(\bm A).
$$
Then for any Cayley--Hamilton tuple $\bm T$, we have 
$$
\sigma_T(\bm T)=\sigma(\bm T)=\operatorname{supp}(\operatorname{Ann}(\bm T)).
$$In particular, for any Cayley--Hamilton tuple $\bm T$, we have
$$
\sigma_T(\bm T)=\sigma_W(\bm A)=\operatorname{supp}(\operatorname{Ann}(\bm T)),
$$where $\sigma_W(\cdot)$ denotes the Waelbroeck spectrum.
All this is proved in \S \ref{S:Annihilation}. Here the notion of the support of an ideal is as originally defined by Clou\^atre and Timko \cite{CT}.
\end{enumerate}
In order for an application of the results stated above, we now turn to the following special algebraic varieties that have invited researchers with interest in function theory, operator theory and complex geometry for good reasons.
\begin{definition}
Let $\cZ$ denote the common zero set of a number of non-zero polynomials in $d$--variables ($d\geq 2)$. We say that the algebraic variety $\cZ\cap\overline{\bD}^d$ is a \textit{distinguished variety} if 
$$
\cZ\cap\bD^d\neq\emptyset \quad\mbox{and}\quad \cZ\cap\partial\overline{\bD}^d=\cZ\cap\bT^d.
$$Here $\partial\overline{\bD}^d$ denotes the Euclidean boundary of $\overline{\bD}^d$ and $\bT^d=\bT\times\cdots\times\bT$ ($d$ times).
\end{definition}
Distinguished varieties dates back at least to Rudin’s 1969 work \cite{Rudin}, 
and was subsequently developed through the landmark contributions of Agler and McCarthy \cite{AM_Acta}. Since then, distinguished varieties 
have attracted considerable attention owing to their deep connections with several areas of mathematics. We mention a few of these connections here. From the perspective of function theory, distinguished varieties arise naturally as the
uniqueness sets for certain solvable Nevanlinna--Pick interpolation problems; see \cite{AM_Acta} (see also \cite{DS}) and the more recent work \cite{DKS}. From the viewpoint of operator theory, they play a crucial role in strengthening Ando’s inequality \cite{AM_Acta, DSJFA2017}. From the complex-geometric standpoint, building on Fedorov’s work on unramified point-separating pairs of inner functions, Vegulla \cite[Theorem~3.4.4]{Vegulla} proved that every planar domain bounded by piecewise analytic curves is conformally equivalent to a distinguished variety, thereby providing a unified framework for the function theory of multi-connected domains.
More recently, it has been shown that distinguished varieties are always one-dimensional 
manifolds with singularities, irrespective of the dimension of the ambient space, see  \cite{Pal}. From the operator-algebraic point of view, Guo, Wang and Zhao \cite{GWZ} established a connection between distinguished varieties and essentially normal quotient modules, particularly in relation to the Arveson--Douglas conjecture. Furthermore, 
distinguished varieties have appeared in generalisations of Wermer’s maximality theorem on the bidisk \cite[Theorem~1.2]{E}. Several algebraic, geometric, and operator-theoretic characterisations of distinguished varieties are now known; see, for  example, the original work \cite{AM_Acta} and the later developments \cite{BKS-APDE, GWZ, Knese-TAMS2010, Pal}. The following result elucidates the connection between distinguished varieties and the Taylor joint spectrum.

\begin{enumerate}
\item[(\textbf{E})] A subset $\cS$ of $\overline{\bD}^d$ is a distinguished variety if and only if 
$$
\cS=\sigma_T(V_1,V_2,\dots,V_d)
$$where $(V_1,V_2,\dots,V_d)$ is a commuting tuple of isometries such that $V_j^{*n}\to 0$ strongly as $n\to\infty$ and $\dim\operatorname{Ran}(I-V_jV_j^*)<\infty$ for each $j=1,2,\dots,d$. This is \cref{T:JointSpecDisVar} below.
\end{enumerate}
Let us mention that the heavy-lifting of the result above is borne by the result stated below and the work done in \cite{BKS-APDE} and in later developments \cite{DasSauPAMS} and \cite{Pal}. 
    \begin{enumerate}
\item[(\textbf{F})] 
Let $\varphi_1,\varphi_2,\dots,\varphi_k$ be matrix-valued rational functions on $\overline{\bD}^d$ as in item (\textbf{B}) above. Then 
$$
\sigma_T(M_{\varphi_1},M_{\varphi_2},\dots,M_{\varphi_k}) = \bigcup_{\bm z\in\overline{\mathbb{D}^d}}\sigma\big( \varphi_1(\bm z), \varphi_2(\bm z),\dots, \varphi_{k}(\bm z)\big).
$$This is \cref{Thm:M} below, obtained as a by-product of an in-depth analysis done for the Cayley--Hamilton tuple as described in item (\textbf{B}) above.
\end{enumerate}

For a bi-variate polynomial $\xi$, when does there exist a commuting isometric pair $(V_1,V_2)$ with finite dimensional defects so that $\cZ(\xi)\cap\overline{\bD}^2=\sigma_T(V_1,V_2)$? The result below provides an algebraic representation of $\xi$ which serves both as a necessary and sufficient condition for an affirmative answer. This can be seen as a partial converse of the main theme of the paper where, loosely speaking, we always start with a operator tuple $\bm T$ and ask when $\sigma_T(\bm T)$ is an algebraic variety. The result stated in item (\textbf{E}) is used in the result below.
\begin{enumerate}
\item[(\textbf{G})] Let $\xi$ be a polynomial in two variables. There exists a pure commuting isometric pair $(V_1,V_2)$ with finite
defect such that
$
\cZ(\xi)\cap\overline{\bD}^2=\sigma_T(V_1,V_2)
$ holds
if and only if there exist unimodular complex numbers $\alpha_i,\beta_j$ for $i=1,2,\dots,m$ and $j=1,2,\dots,n$ such that
\begin{align*}
\xi(z_1, z_2)
=
\prod_{i=1}^m (z_1-\alpha_i)\prod_{j=1}^n(z_2-\beta_j)\cdot \eta(z_1, z_2)\cdot \chi(z_1, z_2),
\end{align*}
where $\chi$ is a polynomial with no zeros on $\overline{\bD^2}$ and $\cZ(\eta)$ defines a distinguished variety. This is with the possibility that some of the factors above may be absent. This is \cref{T:AnyPoly} below.
\end{enumerate}

\section{Continuity of joint eigenvalues and a Cayley--Hamilton Theorem}\label{S:Prelim}
We begin by putting together some key tools. The final goal of this section is to prove the result stated in item (\textbf{A}). 

\subsection{A functional model for commuting isometries}\label{Pr1}
Here we briefly present a functional model for commuting isometries due to Berger, Coburn and Lebow \cite{BCL} (see also the paper \cite{BS-Helsinki} for some new insights into this model). The model is in terms of certain operator-valued linear pencils. For a Hilbert space $\cE$ and a bounded analytic function $\Psi:\bD^d\to \cB(\cE)$, the radial limit
$$
\lim_{r \to1-} \Psi(r\bm\zeta),\quad \bm\zeta\in\bT^d,
$$is known to exist almost everywhere on the $d$-tori $\bT^d$ in the strong operator topology (SOT).
Of particular importance to us are the matrix-valued rational \textit{inner} functions.  A $\cB(\cE)$-valued bounded analytic function is said to be \textit{inner}, if the radial limits are isometric operators. It is well known that in the cases when $\cE=\bC$ and $d=1$, the rational inner functions are precisely the finite Blaschke products. 

A bounded analytic function $\Psi:\bD^d\to\cB(\cE)$ gives rise to a \textit{multiplication operator} $M_{\Psi}: H^2_{\bD^d}(\cE) \rightarrow H^2_{\bD^d}(\cE)$ defined by $f \mapsto \Psi \cdot f$. It is well known that $\Psi$ is inner if and only if $M_{\Psi}$ is an isometry.
An operator $T$ acting on a Hilbert space is said to be \emph{pure} if
$$
{T^*}^n \to 0 \quad \text{in the strong operator topology (SOT).}
$$
When the multiplication operator $M_{\Psi}$ is pure, we say that the function $\Psi$ is pure. A very useful characterisation of pure contractive analytic functions on $\bD$ due to Bercovici--Douglas--Foias \cite{BDF} is the following:
\begin{align}\label{L:PureToeplitz}
\textit{$M_\Psi$ on $H^2_{\bD}(\cE)$ is a pure contraction if and only if $\Psi(0)$ is a pure contraction.}
\end{align}
We refer the readers to the recent work of Sarkar \cite{SrijanCAOT} for results analogous to \eqref{L:PureToeplitz} for general reproducing kernel Hilbert spaces.

For a contractive operator $T$, we shall denote 
$$
D_T=(I-T^*T)^{1/2}\quad\mbox{and}\quad\cD_T=\overline{\operatorname{Ran}}D_T.
$$We now state the functional model for commuting tuples of isometries due to Berger, Coburn, and Lebow \cite{BCL}. Consider a tuple of commuting isometries $(V_1, V_2, \dots, V_d)$ on a Hilbert space $\mathcal{H}$, and let $V := V_1 \cdots V_d$. By the classical Wold decomposition \cite{Wold}, the space $\mathcal{H}$ decomposes as
$$
\mathcal{H} = H^2(\mathcal{D}_{V^*}) \oplus \bigcap_{n\geq1}V^n\cH,
$$
and the operator $V$ decomposes as
$$
V = 
\begin{bmatrix}
	M_z & 0 \\
	0 & W
\end{bmatrix}
:\begin{bmatrix}
H^2(\mathcal{D}_{V^*}) \\ \bigcap_{n\geq1}V^n\cH
\end{bmatrix}
\to 
\begin{bmatrix}
H^2(\mathcal{D}_{V^*}) \\ \bigcap_{n\geq1}V^n\cH
\end{bmatrix}
$$
where $W$, the restriction of $V$ to $\bigcap_{n\geq1}V^n\cH$, is unitary. Berger, Coburn, and Lebow \cite[Theorem 3.1]{BCL} proved that for each $j = 1, \dots, d$, there exist a projection $P_j \in \mathcal{B}(\mathcal{D}_{V^*})$, a unitary operator $U_j \in \mathcal{B}(\mathcal{D}_{V^*})$, and commuting unitary operators $W_1, \dots, W_d$ on $\bigcap_{n\geq1}V^n\cH$ such that, under the same unitary identification, each $V_j$ has the form
\begin{align}\label{BCLform}
V_j = 
\begin{bmatrix}
	M_{P_j^\perp U_j + z P_jU_j} & 0 \\
	0 & W_j
\end{bmatrix}
:
\begin{bmatrix}
H^2(\mathcal{D}_{V^*}) \\ \bigcap_{n\geq1}V^n\cH
\end{bmatrix}\to\begin{bmatrix}
H^2(\mathcal{D}_{V^*}) \\ \bigcap_{n\geq1}V^n\cH
\end{bmatrix}.
\end{align}
\begin{thm}\label{T:BCL}
Suppose $(V_1, V_2, \dots, V_d)$ is a tuple of commuting isometries acting on a Hilbert space $\mathcal{H}$. The Wold decomposition of the product isometry $V := V_1 \cdots V_d$ decomposes each $V_j$ as \eqref{BCLform} for some unique orthogonal projections $P_j$, unitary operators $U_j$ in $\cB(\cD_{V^*})$, and commuting tuple of unitary operators $(W_1,W_2,\dots,W_d)$ acting on $\bigcap_{n\geq1}V^n\cH$. Moreover, when $d=2$, the model \eqref{BCLform} takes the simpler form
\begin{align}\label{BCLform2}
(V_1,V_2)=\left(
\begin{bmatrix}
	M_{P^\perp U + z PU} & 0 \\
	0 & W_1
\end{bmatrix},
\begin{bmatrix}
	M_{U^*P+z U^*P^\perp} & 0 \\
	0 & W_2
\end{bmatrix}
\right)
\end{align}
for some orthogonal projection $P$, unitary operator $U$ in $\cB(\cD_{V^*})$, and commuting pair of unitary operators $(W_1,W_2)$ acting on $\bigcap_{n\geq1}V^n\cH$. 
\end{thm}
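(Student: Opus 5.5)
The general $d$ case is the Berger--Coburn--Lebow theorem, which the paper cites and which we take as given. The plan is to derive the simplified two-variable form \eqref{BCLform2} from the general form \eqref{BCLform} using the relation $V_1V_2=V$. On the wandering-space part $H^2(\cD_{V^*})$, write $V_1=M_{\Phi_1}$ and $V_2=M_{\Phi_2}$, with $\Phi_j(z)=P_j^\perp U_j+zP_jU_j$. The product $V_1V_2=V$ acts there as $M_z$, and multiplication operators compose by multiplying symbols. So we get the identity
$$\Phi_1(z)\Phi_2(z)=zI_{\cD_{V^*}}\quad\text{for all } z\in\bD.$$
On the unitary part we simply have $W_1W_2=W$, and $(W_1,W_2)$ is left untouched.

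Next, set $P=P_1$, $U=U_1$, and expand the identity in powers of $z$. Since $\Phi_1(z)=P^\perp U+zPU$ is a unitary-valued linear pencil on $\bT$, it is invertible for $z\neq0$. Because $\Phi_1(z)$ is unitary for $|z|=1$, its inverse there is
$$\Phi_1(z)^*=U^*P^\perp+\bar zU^*P.$$
Hence, on $\bT$,
$$\Phi_2(z)=z\,\Phi_1(z)^{-1}=zU^*P^\perp+U^*P.$$
Both sides are polynomials in $z$ (using $z\bar z=1$), so they agree on all of $\bD$. This gives $\Phi_2(z)=U^*P+zU^*P^\perp$, which is exactly \eqref{BCLform2}.

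The direct route from \eqref{BCLform} also pins down the relations between the data. Comparing coefficients in $\Phi_1\Phi_2=z$ gives
$$P^\perp U_1P_2^\perp U_2=0,\qquad P U_1P_2U_2=0,\qquad P^\perp U_1P_2U_2+PU_1P_2^\perp U_2=I,$$
and these force $P_2^\perp U_2=U_1^*P_1$ and $P_2U_2=U_1^*P_1^\perp$. This agrees with the inversion argument.

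Uniqueness follows by evaluating symbols. $\Phi_j(0)=P_j^\perp U_j$ and $\Phi_j'(0)=P_jU_j$ are determined by $V_j$ restricted to $H^2(\cD_{V^*})$, since the symbol of an analytic Toeplitz operator is unique. Their sum gives $U_j$, and the polar-type splitting then recovers $P_j$ from $P_jU_jU_j^*=P_j$. The $W_j$ are unique because they are restrictions to the canonical subspace $\bigcap_n V^n\cH$.

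The main technical point is justifying that $\Phi_1(z)$ is unitary on $\bT$, i.e. that $P^\perp U+zPU=(P^\perp+zP)U$ is unitary there. This is immediate, because $P^\perp+zP$ is a unitary for $|z|=1$ and $U$ is unitary.
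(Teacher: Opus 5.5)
Your proposal is correct and follows the paper's treatment. The paper gives no separate proof: it cites Berger--Coburn--Lebow for the general form, and for $d=2$ it remarks that $U_2=U_1^*$ and $P_2=U_1^*P_1^\perp U_1$ follow from the product relation $\varphi_1(z)\varphi_2(z)=zI$. Your inversion of $\Phi_1(z)=(P^\perp+zP)U$ on $\bT$, or equivalently your coefficient comparison, is exactly that check. The one step you leave implicit, extending $\Phi_1\Phi_2=zI$ from $\bD$ to $\bT$, is immediate because both sides are polynomials in $z$.
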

The following class of commuting isometric tuples will be relevant later in the paper.
\begin{definition}\label{D:IsoDefect}
We shall say that a commuting isometric tuple $(V_1,V_2,\dots,V_d)$ 
\begin{itemize}
\item has \textit{finite defect} if $\dim\cD_{V^*}<\infty$ where $V=V_1V_2\cdots V_d$;
\item is \textit{pure}, if $V=V_1V_2\cdots V_d$ is pure, i.e., $V^{*n}\to 0$ in SOT as $m\to\infty$.
\end{itemize}
\end{definition}
We shall be dealing with operator tuples of the form $(M_{\varphi_1},M_{\varphi_2},\cdots,M_{\varphi_k})$. To ensure commutativity of this tuple, we shall need a condition on the symbols $\varphi_j$s. The following terminologies will be useful.
\begin{definition}\label{D:Commutator}
 We say that operator-valued analytic functions $\varphi_1, \dots, \varphi_{k}$ on $\bD^d$ for $d\geq1$, are {\em commuting} if for all $i,j=1,2,...,k$ and $\bm{z}\in\mathbb{D}^d$
    $$
    \left[\varphi_i(\bm z), \varphi_j(\bm z)\right]:=\varphi_i(\bm z) \varphi_j(\bm z)-\varphi_j(\bm z)\varphi_i(\bm z)=0.
    $$
    Of particular interest will be the following operator-valued linear polynomials
\begin{align}\label{varphis1}
\varphi_j(z)=P_j^\perp U_j+zP_jU_j 
\end{align}for some orthogonal projections $P_1, P_2,\dots, P_k$ and unitary operators $U_1, \cdots, U_k$. We shall call these the {\em BCL functions} if they are commuting and if, in addition, they satisfy
\begin{align}\label{varphis2}
\prod_{j=1}^k\varphi_j(z)=zI \quad\mbox{for every }z\in\bC.
\end{align}
\end{definition}
It should be noted that by the Berger--Coburn--Lebow (BCL) \cref{T:BCL}, pure commuting isometric tuples are precisely of the form $(M_{\varphi_1},M_{\varphi_2},\cdots,M_{\varphi_k})$ for some BCL functions. Also note that when a pure commuting isometric tuple has finite defect, then the corresponding BCL functions take values in the matrix space of size $\dim\cD_{V^*}$.

\subsection{ Joint Spectrum}\label{SS:JS} In this section we briefly discuss the Taylor spectrum.
The definition is somewhat involved, yet we find it imperative for the uninitiated readers to have a brief description about it since it will be used throughout the manuscript. Let $\mathcal{E}$ denote the exterior algebra generated by $d$ indeterminate objects $e_1, \dots, e_d$ with identity $e_0 \equiv 1$. This simply means that $\mathcal{E}$ consists of arbitrary linear combinations of products of the $e_i$s and the product is subject to the anti-commutation relation
\begin{align}\label{anti-commute}
e_i e_j =-e_j e_i, \quad 1 \le i,j \le d.
\end{align}
For each $i$, define the \emph{creation operator} $C_i : \mathcal{E} \to \mathcal{E}$ by $
C_i \, \xi = e_i \, \xi, \quad \xi \in \mathcal{E}. $
Upon declaring that the set
$$
\{ e_{i_1} \cdots e_{i_k} : 1 \le i_1 < \dots < i_k \le d \} 
$$ is an orthonormal basis, we can endow $\mathcal{E}$ with a Hilbert space structure. We then have the orthogonal decomposition
$$
\mathcal{E} = \bigoplus_{k=0}^d \mathcal{E}_k,
$$where for each $k\geq1$, $\cE_k=\operatorname{span}\{e_{i_1} \cdots e_{i_k} : 1 \le i_1 < \dots < i_k \le d \}$. It follows that $\dim \mathcal{E}_k = \binom{d}{k}$. Consequently, any $\xi \in \mathcal{E}$ admits a unique decomposition of the form
$\xi = e_i \, \xi' + \xi'',$ 
where neither $\xi'$ nor $\xi''$ contains a component along $e_i$. It then follows that $
C_i^* \, \xi = \xi', $
so that each $C_i$ is a partial isometry satisfying $
C_i^* C_j + C_j C_i^* = \delta_{ij}. $

Let $\cX$ be a normed space and $\bold{T} = (T_1, \dots, T_d)$ be a tuple of commuting bounded operators on $\cX$. Let
$\mathcal{E}(\cX) := \cX \otimes \mathcal{E},$ and define the operator $\Lambda_{\bold{T}} : \mathcal{E}(\cX) \to \mathcal{E}(\cX)$ by
$$
E_{\bold{T}} := \sum_{i=1}^d T_i \otimes C_i.
$$
It is straightforward to verify using \eqref{anti-commute} that $E_{\bold{T}}^2 = 0$. So we have $\operatorname{Ran} E_{\bm{T}} \subset \operatorname{Ker}E_{\bm{T}}$. The tuple $\bold{T}$ is said to be \emph{non-singular} on $\cX$ if
$\operatorname{Ran} E_{\bold{T}} = \operatorname{Ker} E_{\bold{T}}.$

\begin{definition}[Taylor joint spectrum]
	The \emph{Taylor joint spectrum} of $\bold{T}=(T_1, \cdots, T_d)$ on $\cX$ is defined as
$$
	\sigma_T(T_1, \dots, T_d) := \Big\{ \bold{\lambda} = (\lambda_1, \dots, \lambda_d) \in \mathbb{C}^d : \bold{T} - \bold{\lambda} \text{ is singular} \Big\},
$$
	where $\bold{T} - \bold{\lambda} := (T_1 - \lambda_1 I, \dots, T_n - \lambda_n I)$, and singularity means that the operator
$$
	E_{{\bold{T}} - \lambda} := \sum_{i=1}^d (T_i - \lambda_i I) \otimes C_i
$$
	fails to be non-singular, i.e., $\operatorname{Ran} E_{\bold{T} - \lambda} \neq \operatorname{Ker} E_{\bold{T} - \lambda}.$
\end{definition}
The decomposition $\mathcal{E} = \bigoplus_{k=0}^d \mathcal{E}_k$ gives rise to the \emph{Koszul complex} associated to $\bold{T}$ on $\cX$, defined by
\begin{align}\label{KoszulComplex}
K({\bold{T}}, \cX) : 0 \longrightarrow \mathcal{E}_0(\cX) 
\stackrel{D_0}{\longrightarrow} \mathcal{E}_1(\cX)
\stackrel{D_1}{\longrightarrow} \mathcal{E}_1(\cX)\cdots \mathcal{E}_{d-1}(\cX)
\stackrel{D_{d-1}}{\longrightarrow} \mathcal{E}_d(\cX) \longrightarrow 0,
\end{align}
where $D_k$ denotes the restriction of $E_{\bold{T}}$ to the subspace $\mathcal{E}_k(\cX)$. With this notation, the Taylor joint spectrum can equivalently be described as
$$
\sigma_T(\bm{T}) = \{ \lambda \in \mathbb{C}^d : K(\bold{T} - \lambda, \cX) \text{ is not exact} \}.
$$
The equivalence of these two formulations of the Taylor spectrum is explained in \cite{Cur}.

Let us denote by $\sigma_p(\bm T)$ the (possibly empty) set of point spectrum for $(\bm T)$, i.e.,
$$
\sigma_p(\bm T)
=\{\bm\lambda\in\bC^d: \mbox{there exists } 0\neq v \mbox{ such that } T_jv=\lambda_jv \mbox{ for each }j=1,2,\dots,d\}.
$$Then we always have the inclusions
\begin{align}\label{PointTaylor}
\sigma_p(\bm T) \subset \sigma_T(\bm T)\subset \sigma_W(\bm T),
\end{align}where $\sigma_W(\cdot)$ is the Waelbroeck spectrum defined by (see \cite{ArvesonII, W}) 
\begin{align}\label{definition_Wspectra}
\sigma_W(\bold{T}) := \{ \bm\lambda \in \mathbb{C}^d : p(\bm\lambda) \in \sigma(p(\bold{T})) \text{ for all polynomials } p \},
\end{align}
The first inclusion in \eqref{PointTaylor} is due to the fact that for a point spectrum member the exactness of the Koszul complex \eqref{KoszulComplex} breaks at stage $1$.  
The second inclusion can be seen to follow from the facts that both Taylor and Waelbroeck spectra enjoy the polynomial (more generally rational) functional calculus (see \cite{Cur} and \cite[Proposition 1.1.2]{W}), and that any notion of spectrum that enjoys the polynomial functional calculus must be contained in the Waelbroeck Spectrum. The latter fact can be checked easily.

If we denote $\bold{T}^{*} = (T_{1}^{*}, \ldots, T_{d}^{*})$, then the discussion in \cite[Section~3]{RS} 
shows that
\begin{align}\label{AdTaylor}
	\sigma_{\mathrm{T}}(\bold{T}^{*}) = \{(\overline{z_{1}}, \ldots, \overline{z_{d}}) : (z_{1}, \ldots, z_{d}) \in \sigma_{\mathrm{T}}(\bold{T})\}.
\end{align}
Lastly, it is worth mentioning that in the finite-dimensional case, the Waelbroeck spectrum $\sigma_W(T_1, \dots, T_d)$ is just the point spectrum $\sigma_p(T_1, \dots, T_d)$ (see, for example, \cite[Proposition 7]{MPR}), and thus by the inclusions \eqref{PointTaylor}, these three notions of joint spectrum coincide. For this reason, we shall make no distinction in the notation of the Taylor and Waelbroeck joint spectrum for matrices and simply denote it by $\sigma(\cdot)$.

\subsection{Continuity of joint eigenvalues and a Cayley--Hamilton Theorem}
Next, we come to a multi-variable analgoue of two key results in linear algebra, viz., the continuity of eigenvalues and the Cayley Hamilton Theorem. At the heart of the results in this subsection is the fact that every commuting (finite) family of matrices can be triangularized simultaneously by a unitary similarity transformation.

It is well-known that if $A_n$ is a sequence of matrices converging to a matrix $A$ and if $\lambda$ is an eigenvalue of $A$, then there is a strictly increasing sequence $\{n_k\}$ of natural numbers and $\lambda_{n_k}\in \sigma(A_{n_k})$ such that $\lambda_{n_k}\to \lambda$ as $k\to\infty$. The underlined facts used here are the facts that the eigenvalues are precisely the roots of a polynomial and that the roots of a polynomial depend continuously on its coefficients. The joint eigenvalues cannot, in general, be realized as the zero set of a single polynomial in $\mathbb{C}[z_1, z_2, \dots, z_d]$. Nevertheless, the joint eigenvalues enjoys a similar sort of continuity.

\begin{proposition}\label{P:JointContinuity}
Let $(A_{n1},A_{n2},\dots, A_{nd})$ be a sequence of commuting tuple of $N\times N$ matrices converging to $(A_1,A_2,\dots, A_d)$. If $(\lambda_1,\lambda_2,\dots,\lambda_d)$ is a joint eigenvalue for $(A_1,A_2,\dots, A_d)$, then there is an increasing sequence $\{n_k\}$ of natural numbers and $(\lambda_{n_k1},\lambda_{n_k2},\dots, \lambda_{n_kd})$ in $\sigma(A_{n_k1},A_{n_k2},\dots, A_{n_kd})$ such that for all $j=1,2,\dots,d$,
\begin{align*}
\lambda_{n_kj}\to \lambda_j \quad \mbox{as }k\to\infty.
\end{align*}
\end{proposition}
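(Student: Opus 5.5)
We will use simultaneous unitary triangularization together with compactness of the unitary group. For each $n$, the commuting matrices $A_{n1},\dots,A_{nd}$ can be simultaneously upper-triangularized by a single unitary: there is a unitary $Q_n\in M_N(\bC)$ such that $Q_n^*A_{nj}Q_n=R_{nj}$ is upper triangular for every $j=1,\dots,d$. The diagonal of such a simultaneous triangular form lists the joint eigenvalues. Indeed, the $i$-th diagonal tuple $\big((R_{n1})_{ii},\dots,(R_{nd})_{ii}\big)$ lies in $\sigma(A_{n1},\dots,A_{nd})$. Conversely, every joint eigenvalue occurs on the diagonal. To see this, restrict to the joint eigenspace, or note that the joint spectrum of a commuting triangular tuple equals the set of its diagonal tuples.

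Next we pass to a limit. The unitary group is compact, so we may pick an increasing sequence $\{n_k\}$ with $Q_{n_k}\to Q$ for some unitary $Q$. Then
$$R_{n_kj}=Q_{n_k}^*A_{n_kj}Q_{n_k}\to Q^*A_jQ=:R_j \quad \mbox{for each } j.$$
Each $R_j$ is upper triangular, being a limit of upper triangular matrices, and the diagonal entries converge entrywise: $(R_{n_kj})_{ii}\to (R_j)_{ii}$. Thus $(R_1,\dots,R_d)$ is a simultaneous upper-triangularization of $(A_1,\dots,A_d)$ by the unitary $Q$.

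It remains to place the given eigenvalue on the diagonal. We claim that the joint eigenvalue $(\lambda_1,\dots,\lambda_d)$ of $(A_1,\dots,A_d)$ equals the diagonal tuple $\big((R_1)_{ii},\dots,(R_d)_{ii}\big)$ for some index $i$. Granting this, we set $\lambda_{n_kj}:=(R_{n_kj})_{ii}$. These tuples belong to $\sigma(A_{n_k1},\dots,A_{n_kd})$ and converge to $\lambda_j$ for every $j$, which proves the statement.

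The claim is the main point to check, and we prove it as follows. Let $v\neq0$ satisfy $A_jv=\lambda_jv$ for all $j$, and put $w=Q^*v$, so that $R_jw=\lambda_jw$. Let $i$ be the largest index with $w_i\neq0$. Reading off the $i$-th coordinate of $R_jw=\lambda_jw$ and using upper triangularity gives $(R_j)_{ii}w_i=\lambda_jw_i$. Hence $(R_j)_{ii}=\lambda_j$ for all $j$ simultaneously, with the same index $i$ working for every $j$. This simultaneity is exactly what makes the joint version work.
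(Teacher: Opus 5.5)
Your proposal is correct and follows the paper's route: simultaneous unitary triangularization, a convergent subsequence of the unitaries by compactness, and reading the eigenvalue off the diagonal of the limiting triangular form. The one addition is that you check explicitly, using the last nonzero coordinate of $Q^*v$, that the given joint eigenvalue sits in a single diagonal position $i$ for all $j$ simultaneously, which the paper only invokes; like the paper, you take for granted the standard converse that each diagonal tuple of $(R_{n_k1},\dots,R_{n_kd})$ is a joint eigenvalue of $(A_{n_k1},\dots,A_{n_kd})$.
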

\begin{proof}
The crux of the matter is that commuting matrices can be simultaneously upper triangularized. Indeed, there is a sequence $\{U_n\}$ of unitary matrices such that 
$$
U_n^*(A_{n1},A_{n2},\dots, A_{nd})U_n = (T_{n1},T_{n2},\dots, T_{nd})
$$where $T_{nj}$ are upper triangular. Since $U_n$ are unitary, there is a converging subsequence $U_{n_k}$ converging to a unitary $U$, say. Then taking limit as $k\to\infty$ on both sides of 
$$
U_{n_k}^*(A_{n_k1},A_{n_k2},\dots, A_{n_kd})U_{n_k} = (T_{n_k1},T_{n_k2},\dots, T_{n_kd})
$$we get upper triangular matrices $T_j$ for $j=1,2,\dots, d$ such that
\begin{align*}
U^*(A_1,A_2,\dots, A_d)U=(T_1,T_2,\dots,T_d).
\end{align*}What remains is to recall that the joint eigenvalues are unitary invariant and that the joint eigenvalues appear on the diagonal entries of simultaneous triangularization.
\end{proof}
\begin{remark}
The authors could not find Proposition \ref{P:JointContinuity} in the existing literature. However, it can be seen to follow from a non-trivial result in \cite[page 92]{Elsner}. In contrast, the simple proof given above makes use of the simultaneous triangularisation of commuting matrices. This simple proof emerged out of a discussion between the last author and Mukul, an undergraduate student of IISER Pune.
\end{remark}
The following lemma will be used. \begin{lemma}\label{L:ZeroJoint}
Let $\bm A=(A_1,A_2,\dots,A_d)$ be a commuting tuple of $N\times N$ matrices. If
\begin{align}\label{ZeroJoint}
\det(\alpha_1A_1+\alpha_2A_2+\cdots+\alpha_dA_d)=0
\end{align}for all $\bm\alpha=(\alpha_1,\alpha_2,\dots,\alpha_d)$ coming from a  set $\cS$ of cardinality at least $N(d-1)+1$ such that each subset of cardinality $d$ is linearly independent, then $\sigma(\bm A)$ must contain $(0,0,\dots,0)$. In particular, if \eqref{ZeroJoint} holds for all $\bm\alpha$ in $\bC^d$, then the conclusion holds.
\end{lemma}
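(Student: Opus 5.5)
Simultaneous triangularization reduces the claim to a counting argument. Choose a unitary $U$ with $U^*A_jU=T_j$ upper triangular for every $j$, and write $\lambda^{(i)}=(t^{(i)}_1,\dots,t^{(i)}_d)$ for the $i$-th diagonal entries, $i=1,\dots,N$. These $N$ points are exactly the joint eigenvalues of $\bm A$, so $\sigma(\bm A)=\{\lambda^{(1)},\dots,\lambda^{(N)}\}$. Since $\sum_j\alpha_jT_j$ is upper triangular,
\begin{align*}
\det(\alpha_1A_1+\cdots+\alpha_dA_d)=\prod_{i=1}^N\ell_i(\bm\alpha),\qquad \ell_i(\bm\alpha):=\sum_{j=1}^d\alpha_j t^{(i)}_j=\langle \bm\alpha,\overline{\lambda^{(i)}}\rangle .
\end{align*}
So \eqref{ZeroJoint} says that for each $\bm\alpha\in\cS$ some linear form $\ell_i$ vanishes at $\bm\alpha$.

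We argue by contradiction and suppose $0\notin\sigma(\bm A)$, so every $\lambda^{(i)}\neq 0$. Then each $\ell_i$ is a nonzero linear functional, and its kernel $H_i$ is a hyperplane of dimension $d-1$ in $\bC^d$. Since $\det(\sum_j\alpha_jA_j)=0$ for every $\bm\alpha\in\cS$, we get $\cS\subset H_1\cup\cdots\cup H_N$. By pigeonhole, if $|\cS|\ge N(d-1)+1$, some single hyperplane $H_i$ contains at least $d$ points of $\cS$. By hypothesis any $d$ points of $\cS$ are linearly independent, so they span $\bC^d$. They cannot all lie in the $(d-1)$-dimensional subspace $H_i$, which is the contradiction. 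Hence some $\lambda^{(i)}=0$, i.e. $(0,\dots,0)\in\sigma(\bm A)$.

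For the last assertion, it suffices to exhibit a set $\cS\subset\bC^d$ of cardinality $N(d-1)+1$ in general linear position. The moment curve does this: take $\cS=\{(1,t,t^2,\dots,t^{d-1}):t\in F\}$ for $N(d-1)+1$ distinct numbers $t\in F$. Any $d$ of these vectors form a Vandermonde matrix with distinct nodes, hence a nonzero determinant. If \eqref{ZeroJoint} holds on all of $\bC^d$, it holds on this $\cS$, and the first part applies.

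There is no real obstacle. The only point requiring care is the identification of $\sigma(\bm A)$ with the set of diagonal tuples of a simultaneous triangularization. This is the same fact already used in the proof of Proposition \ref{P:JointContinuity}, together with the remark that finite-dimensional Taylor spectrum equals the joint point spectrum.
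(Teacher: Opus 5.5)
Your proof is correct and follows the paper's argument. You triangularize simultaneously so that the determinant becomes a product of the linear forms $\bm\alpha\mapsto\sum_j\alpha_j\lambda^{(i)}_j$, then use pigeonhole to get one joint eigenvalue annihilated by $d$ linearly independent vectors of $\cS$, which forces it to be zero; your contradiction phrasing via the hyperplanes $H_i$ is the same reasoning, and your moment-curve construction usefully justifies the ``in particular'' clause, which the paper leaves implicit.
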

\begin{proof}
Since $(A_1,A_2,\dots,A_d)$ is a commuting tuple, we can suppose without loss of generality that each of the matrices are upper triangular. Then the joint eigenvalues appear in the diagonal entries of $(A_1,A_2,\dots,A_d)$. Suppose
$$
\sigma(A_1,A_2,\dots,A_d)=\{\bm\lambda_j=(\lambda_j^{(1)},\lambda_j^{(2)},\dots,\lambda_j^{(d)}): j=1,2,\dots,N\}.
$$The condition \eqref{ZeroJoint} means that
\begin{align}\label{PigeonHole}
\alpha_1\lambda_j^{(1)}+\alpha_2\lambda_j^{(2)}+\cdots+\alpha_d\lambda_j^{(d)}=0
\end{align}for every $\bm\alpha$ in $\cS$ and for some $j=1,2,\dots,N$ depending on $\bm\alpha$. The cardinality condition on $\cS$ ensures, by pigeonhole principle, that there is at least one joint eigenvalue, say $\bm\lambda_1$, so that \eqref{PigeonHole} is satisfied by $d$ points $\bm\alpha$ in $\cS$. Since any $d$ elements of $\cS$ is linearly independent, this will force $\bm\lambda_1$ to be the zero vector. 
\end{proof}
\begin{thm}\label{Thm:CayleyHamilton}
Every commuting matrix-tuple is a Cayley--Hamilton tuple. More precisely,
for a commuting tuple $\bm  A=(A_1,A_2,\dots,A_d)$ of $N\times N$ matrices, set
\begin{align}\label{p_alpha}
p_{\bm\alpha}(z_1,z_2,\dots,z_d)=\det\bigg( \alpha_1(A_1-z_1I)+\alpha_2(A_2-z_2I)+\cdots+\alpha_d(A_d-z_dI) \bigg).
\end{align}Then 
\begin{align}\label{CayleyHamilton}
p_{\bm\alpha}(\bm A)=0
\quad\mbox{for every $\bm\alpha$ in $\bC^d$},
\end{align}and
\begin{align}\label{SmallerWael}
\sigma(\bm A)= \{\bm\lambda\in\bC^d: p_{\bm\alpha}(\bm\lambda)=0 \mbox{ for all }\bm\alpha\in\bC^d\}.
\end{align}

Moreover, the conclusion remains true even when the vectors $\bm\alpha$ run over a significantly smaller set, viz., any set of cardinality at least $N(d-1)+1$ such that each subset of cardinality $d$ is linearly independent.
\end{thm}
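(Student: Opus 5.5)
We simultaneously upper-triangularize the tuple and then work with its diagonal, which is exactly the set of joint eigenvalues. Since the $A_j$ commute, there is a unitary $U$ with $U^*A_jU=T_j$ upper triangular for every $j$. The determinant in \eqref{p_alpha} and the operator $p_{\bm\alpha}(\bm A)$ both transform compatibly under this unitary similarity. So we may assume each $A_j$ is upper triangular, with diagonal $(\lambda_1^{(j)},\dots,\lambda_N^{(j)})$. As in the proof of Lemma \ref{L:ZeroJoint}, the joint eigenvalues are then $\bm\lambda_i=(\lambda_i^{(1)},\dots,\lambda_i^{(d)})$ for $i=1,\dots,N$.

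\textbf{Step 1: the annihilation identity \eqref{CayleyHamilton}.} The matrix $\sum_j\alpha_j(A_j-z_jI)$ is upper triangular, so its determinant is a product of diagonal entries:
$$p_{\bm\alpha}(\bm z)=\prod_{i=1}^N\Big(\sum_{j=1}^d\alpha_j(\lambda_i^{(j)}-z_j)\Big).$$
Hence
$$p_{\bm\alpha}(\bm A)=\prod_{i=1}^N B_i,\qquad B_i:=\sum_j\alpha_j(\lambda_i^{(j)}I-A_j).$$
The $B_i$ commute, and each is upper triangular with $i$-th diagonal entry equal to $0$. Now apply the standard Cayley--Hamilton argument for triangular matrices: let $E_k=\operatorname{span}\{e_1,\dots,e_k\}$. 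Each $B_i$ maps $E_k$ into $E_k$, and $B_k$ maps $E_k$ into $E_{k-1}$. Ordering the product as $B_1B_2\cdots B_N$, we get $B_N\bC^N\subset E_{N-1}$, then $B_{N-1}E_{N-1}\subset E_{N-2}$, and so on down to $B_1E_1\subset E_0=\{0\}$. Since the $B_i$ commute, the ordering does not matter, and the product is $0$. This holds for every $\bm\alpha\in\bC^d$.

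\textbf{Step 2: the spectral identity \eqref{SmallerWael}.} For the inclusion ``$\subset$'', if $\bm\lambda=\bm\lambda_i$ is a joint eigenvalue, then the $i$-th factor in the product formula vanishes at $\bm z=\bm\lambda$, so $p_{\bm\alpha}(\bm\lambda)=0$ for all $\bm\alpha$. This also follows from Step 1 and the spectral mapping theorem.

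For ``$\supset$'', suppose $p_{\bm\alpha}(\bm\lambda)=0$ for all $\bm\alpha$ in $\cS$. By definition this says
$$\det\Big(\sum_j\alpha_j(A_j-\lambda_jI)\Big)=0 \quad\mbox{for all }\bm\alpha\in\cS.$$
Apply Lemma \ref{L:ZeroJoint} to the commuting tuple $(A_1-\lambda_1I,\dots,A_d-\lambda_dI)$. It gives $0\in\sigma(\bm A-\bm\lambda)$, that is, $\bm\lambda\in\sigma(\bm A)$. Taking $\cS=\bC^d$ proves \eqref{SmallerWael}. Taking $\cS$ to be any set of cardinality at least $N(d-1)+1$ with every $d$-subset linearly independent proves the ``moreover'' statement, since Step 1 already holds for each such $\bm\alpha$. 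Such a finite set exists, for instance points on the moment curve $(1,t,\dots,t^{d-1})$ with distinct values of $t$, by the Vandermonde determinant. So $\cI$ can be chosen finite.

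\textbf{Main obstacle.} The only delicate point is Step 1: one has to see that the product of the commuting triangular factors vanishes even though the individual factors need not be nilpotent. The filtration argument above handles this, so the remaining work is routine bookkeeping.
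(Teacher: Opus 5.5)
Your proposal is correct and takes essentially the same route as the paper: simultaneous unitary triangularization, factoring $p_{\bm\alpha}$ into diagonal linear forms, the filtration argument showing that the product of the triangular factors vanishes, and Lemma \ref{L:ZeroJoint} (with its pigeonhole argument) for the reverse inclusion and the ``moreover'' part. The moment-curve construction of a finite admissible set of vectors $\bm\alpha$ is a small addition that the paper leaves implicit.
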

\begin{proof}
To see that each $p_{\bm\alpha}$ annihilates $\bm A$, we use the simultaneous triangularization of commuting matrices as follows. Suppose without loss of generality that $\bm A$ is in say upper triangular form with respect to the orthogonal basis $\{u_1,u_2,\dots,u_d\}$, and the $i$-th diagonal entry for $A_j$ being $\lambda_j^i$, i.e., the list of diagonal vectors for $\bm A$ is given by
$$
\{\bm\lambda_j=(\lambda_j^{(1)},\lambda_j^{(2)},\dots,\lambda_j^{(d)}): j=1,2,\dots,N\}.
$$Although we do not use it in this proof, the above set constitutes the joint eigenvalues for $\bm A$. For $\bm\alpha$ in $\bC^d$ fixed, the polynomial $p_{\bm\alpha}$ is then given by
\begin{align*}
p_{\bm\alpha}(z_1,z_2,\dots,z_d)=
\prod_{j=1}^d\bigg(\alpha_1(\lambda_j^{(1)}-z_1)+\alpha_2(\lambda_j^{(2)}-z_2)+\cdots +\alpha_d(\lambda_j^{(d)}-z_d)\bigg).
\end{align*}For notational convenience, let us denote the $j$-th factor above by $E_j(z_1,z_2,\dots,z_d)$. Let us note that the upper triangular matrix $E_j(\bm A)$ has the $j$-th diagonal entry $0$ and so
$$
\operatorname{Ran}E_d(\bm A) \subset \bigvee\{u_j:j=1,2,\dots,d-1\}:=\cE_{d-1},
$$and
$$
\operatorname{Ran}E_{d-1}(\bm A)|_{\cE_{d-1}} \subset \bigvee\{u_j:j=1,2,\dots,d-2\}:=\cE_{d-2},
$$More generally, for $j=1,2,\dots,d$,
$$
\operatorname{Ran}E_{j}(\bm A)|_{\cE_{j}} \subset \bigvee\{u_j:j=1,2,\dots,d-2\}:=\cE_{j-1},
$$where $\cE_0:=\{0\}$ and
$$
\cE_j:=\bigvee\{u_j:j=1,2,\dots,j\}\quad\mbox{for }j=1,2,\dots,d-1.
$$Consequently,
\begin{align*}
\operatorname{Ran}p_{\bm\alpha}(\bm A)&=\operatorname{Ran} \bigg(E_1(\bm A)\cdots E_d(\bm A)\bigg)\\
&\subset \operatorname{Ran}E_1(\bm A)|_{\cE_1}=\{0\}
\end{align*}showing that $p_{\bm\alpha}(\bm A)=0$.
Since $p_{\bm\alpha}(\bm A)=0$ for each $\bm\alpha$, it follows that $\sigma(\bm A)$ is contained in 
$$
\{\bm\lambda\in\bC^d: p_{\bm\alpha}(\bm\lambda)=0 \mbox{ for all }\bm\alpha\in\bC^d\}.
$$Since for commuting matrices, the spectrum consists of joint eigenvalues (observed in the discussion following \eqref{AdTaylor}), the proof of \eqref{SmallerWael} will be complete if we can show that every point in the set above
is a joint eigenvalue. Note that if $\bm\lambda=(\lambda_1,\lambda_2,\dots,\lambda_d)$ is in the set above, then the matrix tuple $(A_1-\lambda_1I,A_2-\lambda_2I,\dots,A_d-\lambda_dI)$ satisfies the hypothesis of Lemma \ref{L:ZeroJoint} and thus the zero vector is a joint eigenvalue for $(A_1-\lambda_1I,A_2-\lambda_2I,\dots,A_d-\lambda_dI)$, or equivalently, $\bm\lambda$ is a joint eigenvalue for $\bm A$.
Finally, the pigeonhole principle argument as in the proof of Lemma \ref{L:ZeroJoint} gives the moreover part.
\end{proof} 

\begin{remark}
Let us note the following result due to Phillips \cite{Phil} in connection with the identities \eqref{CayleyHamilton}. Phillips showed that \textit{if $(X_0,X_1\dots,X_{d})$ and $(Y_0,Y_1,\dots, Y_{d})$ are two families of square matrices such that $X_iX_j=X_jX_i$ for each $i,j=0,1,\dots,d$ and
\begin{align}\label{Philips'Hypo}
Y_0X_0+Y_1X_1+ \cdots+Y_{d}X_{d}=0,
\end{align}
then $(X_0,X_1,\dots, X_{d})$ is annihilated by the polynomial 
\[
p(z_0,z_1,\dots,z_{d})=\textup{det}(z_0Y_0+z_1Y_1+\cdots+z_{d}Y_{d}).
\]}New proofs of Philips' result are now available in the literature; see the recent paper \cite{AS} and the references therein. The identities \eqref{CayleyHamilton} can be seen to follow from Philips' result. Indeed, to show that the commuting square matrix tuple $(A_1,A_2,\dots,A_d)$ satisfies the polynomial
$p_{\bm\alpha}$ as in \eqref{p_alpha}, 
we fix an $\bm\alpha=(\alpha_1,\alpha_2,\dots,\alpha_d)$ in $\bC^d$ and take 
$$
(X_0,X_1,\dots,X_d)=(I, A_1,\dots, A_d)
$$ and 
\[
(Y_0,Y_1,\dots,Y_d)=(\alpha_1A_1+\alpha_1A_1+\cdots+\alpha_dA_d, -\alpha_1 I,\dots, -\alpha_d I).
\]
Note that the tuples $(X_0,X_1,\dots,X_d)$ and $(Y_0,Y_1,\dots,Y_d)$ satisfy Philips' hypothesis \eqref{Philips'Hypo} and so $(I, A_1,\dots, A_d)$ is annihilated by the polynomial
\begin{align*}
    p(z_0,z_1,\dots,z_{d})=\textup{det}\bigg(z_0(\alpha_1A_1+\alpha_1A_1+\cdots+\alpha_dA_d))-\alpha_1 z_1I-\cdots-\alpha_dz_dI\bigg),
\end{align*}
which means the same thing as in \eqref{CayleyHamilton} because $p(1,z_1,\dots,z_d)$ is the same polynomial as in \eqref{CayleyHamilton}.
\end{remark}

For matrix-valued rational symbols $\Psi_j$, we shall compute the (Taylor and Waelbroeck) joint spectrum of the tuple $(M_{\Psi_1},M_{\Psi_2},\dots,M_{\Psi_d})$.
The following single variable result will be used. The authors were unable to find a proof of this, so a proof is included. 
\begin{lemma}\label{Lemma:Matrix-single}
Let $\Psi:\bD\to M_N(\bC)$ be a rational function with poles away from $\overline{\bD}$. Then 
$$
\sigma(M_\Psi) = \bigcup_{z\in\overline{\bD}} \sigma(\Psi(z)).
$$
\end{lemma}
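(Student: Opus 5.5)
The plan is to prove the two inclusions separately and write
$$
K:=\bigcup_{z\in\overline{\bD}}\sigma(\Psi(z)).
$$
Throughout, $M_\Psi$ acts on $H^2(\bC^N)$. Since $\Psi$ has its poles away from $\overline{\bD}$, it is holomorphic on a disk $\{|z|<1+\epsilon\}$. In particular $\Psi$ is bounded on $\overline{\bD}$, so $M_\Psi$ is a bounded operator.

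\emph{Step 1: $\sigma(M_\Psi)\subset K$.} Let $\lambda\notin K$. Then the scalar function $q(z)=\det(\Psi(z)-\lambda I)$ has no zeros on $\overline{\bD}$. Since $q$ is holomorphic near $\overline{\bD}$ and $\overline{\bD}$ is compact, $q$ has no zeros on some slightly larger disk $\{|z|<1+\epsilon'\}$.

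By Cramer's rule,
$$
(\Psi(z)-\lambda I)^{-1}=\frac{\operatorname{adj}(\Psi(z)-\lambda I)}{q(z)}.
$$
This is again a matrix-valued rational function whose poles lie off that larger disk. Hence it is bounded and analytic on $\bD$, and so it defines a bounded multiplication operator $M_{(\Psi-\lambda)^{-1}}$ on $H^2(\bC^N)$. This operator is a two-sided inverse of $M_\Psi-\lambda I=M_{\Psi-\lambda}$. Therefore $\lambda\notin\sigma(M_\Psi)$.

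\emph{Step 2: interior points.} Let $\lambda\in\sigma(\Psi(z_0))$ with $|z_0|<1$. Then $\bar\lambda$ is an eigenvalue of $\Psi(z_0)^*$; pick a corresponding eigenvector $w\neq0$. Let $k_{z_0}$ be the Szeg\H{o} kernel at $z_0$. The standard identity
$$
M_\Psi^*(k_{z_0}\otimes w)=k_{z_0}\otimes \Psi(z_0)^*w
$$
follows by pairing both sides with an arbitrary $f\in H^2(\bC^N)$ and using the reproducing property. It gives
$$
M_\Psi^*(k_{z_0}\otimes w)=\bar\lambda\,(k_{z_0}\otimes w).
$$
So $\bar\lambda\in\sigma_p(M_\Psi^*)$, and consequently $\lambda\in\sigma(M_\Psi)$.

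\emph{Step 3: boundary points.} Let $\lambda\in\sigma(\Psi(z_0))$ with $|z_0|=1$. Choose $z_n\in\bD$ with $z_n\to z_0$. Continuity of $\Psi$ on $\overline{\bD}$ gives $\Psi(z_n)\to\Psi(z_0)$. Proposition \ref{P:JointContinuity}, applied with $d=1$, then yields a subsequence $n_k$ and eigenvalues $\lambda_{n_k}\in\sigma(\Psi(z_{n_k}))$ with $\lambda_{n_k}\to\lambda$. By Step 2, each $\lambda_{n_k}$ lies in $\sigma(M_\Psi)$. Since $\sigma(M_\Psi)$ is closed, $\lambda\in\sigma(M_\Psi)$. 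This proves $K\subset\sigma(M_\Psi)$ and completes the argument.

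The only delicate point is Step 1. There one needs the inverse symbol to be bounded on all of $\bD$, not merely pointwise invertible. This is secured because $q$ is holomorphic and zero-free on the compact set $\overline{\bD}$, and hence zero-free on an open neighbourhood of it. Steps 2 and 3 are routine once the kernel-function identity and Proposition \ref{P:JointContinuity} are in hand.
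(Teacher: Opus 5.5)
Your proposal is correct and follows essentially the same route as the paper: Szeg\H{o}-kernel eigenvectors of $M_\Psi^*$ for interior points, Proposition \ref{P:JointContinuity} plus closedness of the spectrum for boundary points, and the adjugate formula to invert $M_{\Psi-\lambda}$ when $\det(\Psi(z)-\lambda I)$ has no zeros on $\overline{\bD}$. The only difference is cosmetic: you get boundedness of the inverse symbol from zero-freeness on a slightly larger disk, whereas the paper uses a uniform lower bound on $|\det|$ over the compact set $\overline{\bD}$.
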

\begin{proof}
Suppose $\lambda\in\sigma(\Psi(z_0))$ for some $z_0\in\bD$. Let $v$ be non-zero vector in $\bC^N$ such that $\Psi(z_0)^*v=\overline{\lambda}v.$
This shows that 
$$
M_\Psi^*(k_{z_0}\otimes v)=k_{z_0}\otimes \Psi(z_0)^*v = \overline{\lambda}(k_{z_0}\otimes v),
$$which means $k_{z_0}\otimes v$ is an eigenvector for $M_\Psi^*$ corresponding to $\overline{\lambda}$. Thus we have
$$
 \bigcup_{z\in\bD} \sigma(\Psi(z))\subseteq\sigma(M_\Psi).
$$Since $\sigma(M_\Psi)$ is a compact set, it will contain the the closure of the union above. Now suppose that $\lambda\in\Psi(\zeta)$ for some $\zeta\in\mathbb{T}$. Let $r_n$ be an increasing sequence converging to $1$. Then consider the sequence $\Psi(r_n\zeta)$ of matrices converging to $\Psi(\zeta)$. By Continuity of Eigenvalues, $\lambda$ must be the limit point of eigenvalues of $\Psi(r_n\zeta)$ possibly along a subsequence. Thus, $\lambda$ must be in the closure of the union above and hence in $\sigma(M_\Psi)$.

It remains to establish the containment $\sigma(M_\Psi)\subseteq\bigcup_{z\in\overline{\bD}} \sigma(\Psi(z))$. Note that $M_\Psi-\lambda I = M_{\Psi-\lambda}$ and so it is enough to show that if $0\in \sigma(M_\Psi)$, then $0\in\bigcup_{z\in\overline{\bD}} \sigma(\Psi(z))$. Suppose, on the contrary, that $0$ does not belong to $\bigcup_{z\in\overline{\bD}} \sigma(\Psi(z))$. The goal is to show that $z\mapsto \Psi(z)^{-1}$ is bounded on $\bD$. Once we achieve this, $M_{\Psi^{-1}}$ would be a bounded inverse of $M_\Psi$. The hypothesis that each $\Psi(z)$ is non-singular implies that the continuous function
$$
\overline{\bD}\ni z \mapsto \det\Psi(z)
$$is non-vanishing on the compact set and therefore there must exist $\alpha>0$ such that $|\det \Psi(z)|>\alpha$ for every $z\in\overline{\bD}$. Note the formula for the rational function $\Psi(z)^{-1}$ 
$$
\Psi(z)^{-1} = \frac{1}{\det\Psi(z)} \operatorname{Adj}\Psi(z).
$$For a bounded function $\Psi(\cdot)$, $\operatorname{Adj}\Psi(\cdot)$ is also bounded. One way to see this is to note that each entry of the adjugate matrix is bounded by a certain number independent of the entry. Thus $\Psi(z)^{-1}$ is the product of two bounded function, and hence it is bounded.
\end{proof}

\section{The proofs of the main results} \label{S:MainSec}
This section contains the proofs of all the main results stated in items (\textbf{B})-(\textbf{G}).
The section is divided into several subsections and the results are proved in turn.
\subsection{Joint spectrum for commuting Toeplitz operators}
Here we prove the results stated in items (\textbf{B}) and (\textbf{F}). We begin with the following lemma.
\begin{lemma}\label{L:TheUnion}
Let $\varphi_1, \dots, \varphi_{k}$ be commuting (refer to Definition \ref{D:Commutator}), contractive $N\times N$ matrix-valued rational functions with poles away from the closed polydisk $\overline{\bD}^d$. Then
    \begin{align}\label{TheUnion1}
    \bigcup_{\bm z\in\overline{\mathbb{D}^d}}\sigma\big( \varphi_1(\bm z),\varphi_2(\bm z),\dots, \varphi_{k}(\bm z)\big)\subset  \sigma_T(M_{\varphi_1}, M_{\varphi_2}, \dots, M_{\varphi_k}) \subset
    \sigma_W(M_{\varphi_1},\dots, M_{\varphi_k}).
    \end{align}Moreover, the following are equivalent, where $\xi$ is a polynomial in $d$ variables:
    \begin{enumerate}
\item  $\sigma_W(M_{\varphi_1},M_{\varphi_2}, \dots, M_{\varphi_k})\subset \cZ(\xi)$;

\item $\sigma_T(M_{\varphi_1},M_{\varphi_2}, \dots, M_{\varphi_k})\subset \cZ(\xi)$;

\item $\bigcup_{\bm z\in\overline{\mathbb{D}^d}}\sigma\big( \varphi_1(\bm z),\varphi_2(\bm z),\dots, \varphi_{k}(\bm z)\big)\subset \cZ(\xi)$; and 

\item $\xi^r(M_{\varphi_1},M_{\varphi_2}, \dots, M_{\varphi_k})=0$ for some $r\in\mathbb{ N}$. 
    \end{enumerate}
    Moreover, the integer $r$ in item $(4)$ above can be chosen such that $r \leq N$.
\end{lemma}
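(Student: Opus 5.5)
We first establish the inclusions \eqref{TheUnion1}, and then run the cycle $(1)\Rightarrow(2)\Rightarrow(3)\Rightarrow(4)\Rightarrow(1)$.

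\emph{The inclusions \eqref{TheUnion1}.} The second inclusion is the general fact \eqref{PointTaylor}. For the first, we argue as in Lemma \ref{Lemma:Matrix-single}. Fix $\bm z\in\bD^d$ and a joint eigenvalue $\bm\lambda$ of the commuting matrices $(\varphi_1(\bm z),\dots,\varphi_k(\bm z))$. By \eqref{AdTaylor}, $(\varphi_1(\bm z)^*,\dots,\varphi_k(\bm z)^*)$ has joint eigenvalue $\overline{\bm\lambda}$; indeed, simultaneous triangularization of the adjoints produces a common eigenvector $v$ with $\varphi_j(\bm z)^*v=\overline{\lambda_j}v$. Then $M_{\varphi_j}^*(k_{\bm z}\otimes v)=\overline{\lambda_j}\,k_{\bm z}\otimes v$, where $k_{\bm z}$ is the Szeg\H{o} kernel of $H^2_{\bD^d}$. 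Hence $\overline{\bm\lambda}\in\sigma_p(\bm M^*)\subset\sigma_T(\bm M^*)$, and so $\bm\lambda\in\sigma_T(\bm M)$ by \eqref{AdTaylor}. For boundary points $\bm\zeta\in\overline{\bD}^d\setminus\bD^d$, apply Proposition \ref{P:JointContinuity} to $\varphi_j(r_n\bm\zeta)\to\varphi_j(\bm\zeta)$. This approximates every joint eigenvalue at $\bm\zeta$ by joint eigenvalues at interior points, and the inclusion follows because $\sigma_T$ is closed.

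\emph{The cycle of implications.} Given the inclusions, $(1)\Rightarrow(2)\Rightarrow(3)$ is immediate. For $(4)\Rightarrow(1)$, the polynomial mapping property of $\sigma_W$ gives $\xi(\sigma_W(\bm M))=\sigma(\xi^r(\bm M))^{1/r}=\{0\}$. Alternatively, directly from \eqref{definition_Wspectra} with $p=\xi^r$: $\xi^r(\bm\lambda)\in\sigma(0)=\{0\}$.

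\emph{The key step $(3)\Rightarrow(4)$.} Let $\Phi(\bm z):=\xi(\varphi_1(\bm z),\dots,\varphi_k(\bm z))$. This is an $N\times N$ matrix-valued rational function, holomorphic near $\overline{\bD}^d$, and $\xi(\bm M)=M_\Phi$. The spectral mapping theorem for the commuting matrices $\varphi_j(\bm z)$ gives
\[
\sigma(\Phi(\bm z))=\xi\big(\sigma(\varphi_1(\bm z),\dots,\varphi_k(\bm z))\big).
\]
Under (3) this set is $\{0\}$, so $\Phi(\bm z)$ is nilpotent for every $\bm z$. By Cayley--Hamilton, $\Phi(\bm z)^N=0$ pointwise. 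Therefore $\xi^N(\bm M)=M_{\Phi^N}=0$, which gives (4) with $r=N$; this is also the ``moreover'' bound $r\le N$.

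\emph{Main obstacle.} The step I expect to be most delicate is the first inclusion at the boundary. Commutativity is only assumed in the sense of Definition \ref{D:Commutator}, and it must extend to $\overline{\bD}^d$ by continuity. Proposition \ref{P:JointContinuity} also needs a careful reading: as stated, it approximates eigenvalues of the limit tuple by eigenvalues of the approximants, which is exactly the direction required here. The remaining steps are routine.
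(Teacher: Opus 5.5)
Your proposal is correct and follows the paper's proof essentially step for step: Szeg\"o-kernel eigenvectors of the adjoint tuple together with \eqref{AdTaylor} at interior points, Proposition~\ref{P:JointContinuity} and closedness of $\sigma_T$ at the boundary, the definition \eqref{definition_Wspectra} for $(4)\Rightarrow(1)$, and pointwise nilpotency of $\xi(\varphi_1(\bm z),\dots,\varphi_k(\bm z))$ for $(3)\Rightarrow(4)$. The differences are cosmetic. You take $r=N$ directly from the Cayley--Hamilton theorem, whereas the paper maximizes $r(\bm z)$ over $\overline{\bD}^d$. Your first phrasing of $(4)\Rightarrow(1)$ via ``$\sigma(\xi^r(\bm M))^{1/r}$'' is garbled, but the direct argument you give right after it is exactly the paper's.
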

\begin{proof}
For the containment \eqref{TheUnion1}, it is convenient to work with the adjoint operator. Let $\bm\lambda$ be in $\bD^d$ and
	$(\overline{\mu}_1,\overline{\mu}_2, \ldots, \overline{\mu}_k)$ be a joint eigenvalue of $\big(\varphi_1(\bm\lambda)^*,\varphi_2(\bm\lambda)^*, \ldots, \varphi_k(\bm\lambda)^*\big)$ with $u$ as a joint eigenvector. 
	Let $K$ be the Szeg\"o kernel on $\bD^d$. Then it is well known that 
	$$
	M_{\varphi_j}^*(K_{\bm\lambda} \otimes u) 
	= K_{\bm\lambda} \otimes \varphi_j(\bm\lambda)^* u
	= \overline{\mu}_j\,(K_{\bm\lambda} \otimes u).
	$$
	Therefore,  $(\overline{\mu}_1,\overline{\mu}_2,\dots, \overline{\mu}_k)$ is in the point spectrum of  $\left( M_{\varphi_1}^*, M_{\varphi_2}^*,\dots, M_{\varphi_{k}}^*\right)$. Thus, by virtue of 
    \eqref{AdTaylor}, we get
	$$
	\bigcup_{\bm{z} \in {\mathbb{D}^d}} \sigma\left({\varphi}_1(\bm{z}), {\varphi}_2(\bm{z}),\dots, \varphi(\bm z_d)\right)\subset \sigma_{T}( M_{\varphi_1}, M_{\varphi_2},\dots, M_{\varphi_k}). 
	$$
	The continuity of the joint spectrum (i.e., Proposition \ref{P:JointContinuity}), and the continuity of the symbols $\varphi_j$ on $\overline{\bD}^d$ yield the first containment in \eqref{TheUnion1}. The second containment is a generality already observed in \eqref{PointTaylor}.

Let us next note that the implications $(1)\Longrightarrow (2)\Longrightarrow (3)$ are easy consequences of \eqref{TheUnion1}. The implication $(4)\Longrightarrow(1)$ is easy because if $(\lambda_1, \lambda_2, \dots, \lambda_k)$ is an element of $\sigma_W(M_{\varphi_1}, M_{\varphi_2}, \dots, M_{\varphi_k})$, then, by the definition of the Waelbroeck spectrum, we get 
$$\xi^r(\lambda_1,\lambda_2, \dots, \lambda_k)\in \sigma(\xi^r(M_{\varphi_1}, M_{\varphi_2},\dots, M_{\varphi_k}))=\sigma (0)=\{0\}.$$
Therefore $\sigma_W(M_{\varphi_1},M_{\varphi_2}, \dots, M_{\varphi_k})\subset \cZ(\xi)$. So for the equivalence of the statements $(1)-(4)$, what remains is to show $(3)\Longrightarrow(4)$.
\smallskip

\noindent
{\sf Proof of $(3)\Longrightarrow (4)$:} By $(3)$, we have $$\sigma(\xi(\varphi_1(\bm z), \varphi_2(\bm z),\dots, \varphi_k(\bm z)))=\xi(\sigma(\varphi_1(\bm z), \varphi_2(\bm z),\dots, \varphi_k(\bm z)))=\{0\}$$ for all $\bm z\in\overline{\bD^d}$.  Since $(\varphi_1(\bm z), \varphi_2(\bm z),\dots, \varphi_k(\bm z))$ is a commuting $N\times N$ matrix tuple, the above observation shows that $\xi(\varphi_1(\bm z),\varphi_1(\bm z), \dots, \varphi_k(\bm z))$ is a nilpotent matrix and we are assured of an integer $r\leq N$ so that 
$$\xi^r(\varphi_1(\bm z),\varphi_2(\bm z), \dots, \varphi_k(\bm z))=\left[\xi(\varphi_1(\bm z),\varphi_2(\bm z), \dots, \varphi_k(\bm z))\right]^r=0.$$
Note that the integer $r=r(\bm z)$ above will, a priori, depend on $\bm z\in\overline{\bD}^d$ but to avoid this technicality, we can take $r=\max\{r(\bm z):\bm z\in\overline{\bD}^d\}$, which is not greater than $N$.
\end{proof}

\begin{thm}\label{Poly1}
	Let $\varphi_1, \varphi_2,\dots, \varphi_{k}$ be commuting and contractive $N\times N$ matrix-valued rational functions with poles away from the closed polydisk $\overline{\bD}^d$ for $d\geq1$. Then 
	\begin{align}\label{desired identity}
	\notag	\sigma_{T}(M_{ z_1}, \dots, M_{z_d}, M_{\varphi_1}, \dots, M_{\varphi_k})
		&= \sigma_W(M_{ z_1},\dots, M_{z_d}, M_{\varphi_1}, \dots, M_{\varphi_k})\\
		&=	\bigcup_{\bm z\in\overline{\mathbb{D}^d}}\sigma\big({z}_1I, \dots, {z}_dI, \varphi_1(\bm z),\dots, \varphi_{k}(\bm z)\big)
	\end{align}
 is an algebraic variety of dimension $d$ without isolated points. In particular, the tuple $(M_{ z_1}, \dots, M_{z_d}, M_{\varphi_1}, \dots, M_{\varphi_k})$ is a Cayley--Hamilton tuple.
\end{thm}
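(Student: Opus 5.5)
The plan is to apply Lemma~\ref{L:TheUnion} to the enlarged tuple of symbols $(z_1I,\dots,z_dI,\varphi_1,\dots,\varphi_k)$. These are $d+k$ commuting, contractive, matrix-valued rational functions with no poles on $\overline{\bD}^d$, so the lemma applies. Write $\bm\Phi$ for this tuple and $\Sigma:=\bigcup_{\bm z\in\overline{\bD}^d}\sigma(\bm z I,\varphi_1(\bm z),\dots,\varphi_k(\bm z))$. The containment \eqref{TheUnion1} then gives $\Sigma\subset\sigma_T(M_{\bm\Phi})\subset\sigma_W(M_{\bm\Phi})$. For the reverse inclusion I will exhibit polynomials vanishing on $\Sigma$ whose common zero set in $\overline{\bD}^d\times\bC^k$ is exactly $\Sigma$. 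The equivalence $(3)\Leftrightarrow(1)$ of Lemma~\ref{L:TheUnion} then forces $\sigma_W(M_{\bm\Phi})\subset\Sigma$, and all three sets coincide.

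To produce the polynomials, fix $\bm z\in\overline{\bD}^d$. The joint spectrum of $(\bm zI,\varphi(\bm z))$ is $\{\bm z\}\times\sigma(\varphi_1(\bm z),\dots,\varphi_k(\bm z))$. By Theorem~\ref{Thm:CayleyHamilton}, the second factor is the common zero set of
\[
q_{\bm\alpha}(\bm z,\bm w)=\det\Big(\sum_{j=1}^k\alpha_j(\varphi_j(\bm z)-w_jI)\Big),
\]
with $\bm\alpha$ ranging over a finite set $\cS\subset\bC^k$ of cardinality $N(k-1)+1$ in general position. Each $q_{\bm\alpha}$ is rational in $\bm z$ and polynomial in $\bm w$. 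Let $h$ be a common denominator of the $\varphi_j$ that does not vanish on $\overline{\bD}^d$. Then $\xi_{\bm\alpha}:=h^{N}q_{\bm\alpha}$ is a genuine polynomial in $(\bm z,\bm w)$, and its zero set on $\overline{\bD}^d\times\bC^k$ coincides with that of $q_{\bm\alpha}$. Hence
\[
\Sigma=\cZ(\{\xi_{\bm\alpha}\}_{\bm\alpha\in\cS})\cap(\overline{\bD}^d\times\bC^k).
\]
Because the $\varphi_j$ are contractive, all eigenvalues lie in $\overline{\bD}$, so $\Sigma\subset\overline{\bD}^{d+k}$. Thus $\Sigma$ is an algebraic variety in the sense of Definition~\ref{D:AlgVar}, cut out by finitely many polynomials. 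Lemma~\ref{L:TheUnion} $(3)\Rightarrow(4)$ gives $\xi_{\bm\alpha}^{r}(M_{\bm\Phi})=0$ with $r\le N$. So $\cI=\{\xi_{\bm\alpha}^{r}\}$ annihilates the tuple, and its zero set in the closed polydisk is $\sigma_T$. This is exactly the Cayley--Hamilton property.

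It remains to check dimension $d$ and the absence of isolated points. Projection onto the first $d$ coordinates maps $\Sigma$ onto $\overline{\bD}^d$, since joint spectra of matrix tuples are nonempty. Each fiber has at most $N$ points. So $\Sigma$ is a finite branched cover of $\overline{\bD}^d$ and has dimension $d$. For isolatedness, take $(\bm z_0,\bm\mu)\in\Sigma$ and a sequence $\bm z_n\to\bm z_0$ with $\bm z_n\neq\bm z_0$ in $\overline{\bD}^d$. Proposition~\ref{P:JointContinuity} yields, along a subsequence, joint eigenvalues $\bm\mu_n$ of $\varphi(\bm z_n)$ converging to $\bm\mu$. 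The points $(\bm z_n,\bm\mu_n)\in\Sigma$ are distinct from $(\bm z_0,\bm\mu)$ and converge to it.

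The main obstacle I expect is making the dimension claim rigorous in the paper's framework, where "dimension" of a variety intersected with the polydisk must be interpreted appropriately. I would argue it via the finite-to-one surjective projection onto the $\bm z$ coordinates. If needed, I would restrict to the open set where the fiber cardinality is locally constant, and use the holomorphic implicit-function structure of simple joint eigenvalues there to get a $d$-dimensional complex manifold piece.
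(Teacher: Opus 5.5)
Your proposal is correct and is essentially the paper's argument. Your $\xi_{\bm\alpha}=h^Nq_{\bm\alpha}$ are the paper's $p_{\bm\alpha}$ (take $h=q$), and getting $\sigma_W\subset\Sigma$ by annihilating the tuple with $\xi_{\bm\alpha}^r$ via Lemma~\ref{L:TheUnion} is exactly the alternative recorded in Remark~\ref{Remark:variety}, whereas the main proof obtains $p_{\bm\alpha}(\bm M_{\bm z},\bm M_{\bm\varphi})=0$, with no power, directly from the matrix Cayley--Hamilton theorem applied pointwise. The one step you leave implicit is $\sigma_W(M_{\bm\Phi})\subset\overline{\bD}^d\times\bC^k$, which follows at once from taking $p=z_j$ in the definition of $\sigma_W$, since $\sigma(M_{z_j})=\overline{\bD}$.
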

\begin{proof} We shall first establish the equalities in \eqref{desired identity}.
For $\bm z\in\overline{\bD^d}$, we shall use the short-hand notation
\[
(\bm{z}I, \boldsymbol{\varphi}(\bm{z})) := \big(z_1 I, \ldots, z_d I, \varphi_1(\bm{z}), \ldots, \varphi_k(\bm{z})\big)
\]and
\[
(\bm M_{\bm z},\bm M_{\bm\varphi}):=(M_{ z_1}, \dots, M_{z_d}, M_{\varphi_1}, \dots, M_{\varphi_k}).
\]
Let us also denote
	\begin{align}\label{TheUnion}
	\Omega := \bigcup_{\bm{z} \in \overline{\mathbb{D}^d}} \sigma\big(\bm{z}I, \boldsymbol{\varphi}(\bm{z})\big)\subset\overline{\bD^{d}}\times\overline{\bD^{k}}.
	\end{align}
By Lemma \ref{L:TheUnion}, we already have
    \begin{align*}
\Omega=\bigcup_{\bm{z} \in \overline{\mathbb{D}^d}} \sigma\big(\bm{z}I, \boldsymbol{\varphi}(\bm{z})\big) \subset \sigma_{T}(\bm M_{\bm z}, \bm M_{\bm \varphi})\subset \sigma_W(\bm M_{\bm z},\bm M_{\bm \varphi}).
   \end{align*}
    Therefore to complete the proof of the equalities in \eqref{desired identity}, we just need to prove
   \begin{align}\label{HowThough}
\sigma_W(\bm M_{\bm z},\bm M_{\bm \varphi})\subset \bigcup_{\bm{z} \in \overline{\mathbb{D}^d}} \sigma\big(\bm{z}I, \boldsymbol{\varphi}(\bm{z})\big)=\Omega.
   \end{align}So suppose $(\bm\lambda_1,\bm\lambda_2)$ is in $\sigma_W(\bm M_{\bm z},\bm M_{\bm\varphi})$. Note that for $(\bm\lambda_1,\bm\lambda_2)$ to be in the union as in \eqref{HowThough}, we must find a point $\bm z_0\in \overline{\bD}^d$ so that $(\bm\lambda_1,\bm\lambda_2)$ belongs to $\sigma\big(\bm{z}_0I, \boldsymbol{\varphi}(\bm{z}_0)\big)$. It is then immediate that $\bm z_0=\bm\lambda_1$. Note that here we make use of the cooridinate multipliers $\bm M_{\bm z}$. Thus we must show that $(\bm\lambda_1,\bm\lambda_2)\in\sigma\big(\bm\lambda_1I, \boldsymbol{\varphi}(\bm\lambda_1)\big)$ or equivalently, we have to show that
   \begin{align}
(\lambda_{21},\lambda_{22},\dots,\lambda_{2k})=:\bm\lambda_2\in\sigma(\bm\varphi(\bm\lambda_1))=\sigma(\varphi_1(\bm\lambda_1),\varphi_2(\bm\lambda_1),\dots,\varphi_k(\bm\lambda_1)).
   \end{align}Since $\varphi_j(\bm\lambda_1)$ are matrices, by Lemma \ref{L:ZeroJoint}, it is enough to show that
   \begin{align}\label{ZeroDet}
\det\bigg
(\alpha_1(\varphi_1(\bm\lambda_1)-\lambda_{21}I)+\alpha_2(\varphi_2(\bm\lambda_1)-\lambda_{22}I)+\cdots+\alpha_k(\varphi_k(\bm\lambda_1)-\lambda_{2k}I)\bigg)=0
   \end{align}
   for all $(\alpha_1, \alpha_2, \dots, \alpha_k)\in\bC^k$. Since $\varphi_j$ are assumed to be rational, let 
   $$
   \varphi_j(\bm z)=\frac{F_j(\bm z)}{q_j(\bm z)} \quad\mbox{for }j=1,2,\dots,k
   $$be its minimal representation in the sense that $q_j$ does not divide each entry of the matrix polynomial $F_j$. Consider the polynomials
   \begin{align}\label{SomePoly}
   p_i(\bm z) := \prod_{i\neq j=1}^k q_j(\bm z)\quad\mbox{and}\quad q(\bm z):=\prod_{j=1}^k q_j(\bm z).
   \end{align}
   With these notations in place, \eqref{ZeroDet} becomes
   \begin{align*}
   \det\bigg
(&\alpha_1p_1(\bm\lambda_1)(F_1(\bm\lambda_1)-\lambda_{21}q_1(\bm\lambda_1)I)\\
&+\alpha_2p_2(\bm\lambda_1)(F_2(\bm\lambda_1)-\lambda_{22}q_2(\bm\lambda_1)I)+\cdots+\alpha_kp_k(\bm\lambda_1)(F_k(\bm\lambda_1)-\lambda_{2k}q_k(\bm\lambda_1)I)\bigg)=0.
   \end{align*}
   For $\bm\alpha\in\bC^k$, consider the $(d+k)$-variable polynomial
   \begin{align}\label{ThePs}
   \notag 
   p_{\bm\alpha}(\bm z_1,\bm z_2)=
   \det\bigg
(\alpha_1p_1(\bm z_1)(F_1(\bm z_1)-z_{21}q_1(\bm z_1)I)
&+\alpha_2p_2(\bm z_1)(F_2(\bm z_1)-\lambda_{22}q_2(\bm z_1)I)\\
&+\cdots+\alpha_kp_k(\bm z_1)(F_k(\bm z_1)- z_{2k}q_k(\bm z_1)I)\bigg)
   \end{align}where $\bm z_1\in\bC^d$ and $\bm z_2=(z_{21},z_{22},\dots,z_{2k})\in\bC^k$. Since $(\bm\lambda_1,\bm\lambda_2)\in \sigma_W(\bm M_{\bm z},\bm M_{\bm\varphi})$, we must have
   $$
   p_{\bm\alpha}(\bm\lambda_1,\bm\lambda_2) \in \sigma(p_{\bm\alpha}(\bm M_{\bm z},\bm M_{\bm\varphi})) =\sigma( M_{p_{\bm\alpha}(\bm zI,\bm\varphi(\bm z))}).
   $$We now argue that
   \begin{align}\label{ToShow}
   p_{\bm\alpha}(\bm zI,\bm\varphi(\bm z))=0 \quad\mbox{for every }\bm z\in\bD^d.
   \end{align}This would yield $p_{\bm\alpha}(\bm\lambda_1,\bm\lambda_2)=0$, which is what was needed to show. For every $\bm z_1\in\bD^d$, let us look at the polynomial
   \begin{align*}
       r_{\bm\alpha,\bm z_1}(\bm z_2)&:=\det\bigg((\alpha_1(\varphi_1(\bm z_1)-z_{21}I)+\alpha_2(\varphi_2(\bm z_1)-z_{22}I)+\cdots+\alpha_k(\varphi_k(\bm z_1)-z_{2k}I)\bigg)\\
       &= \frac{p_{\bm\alpha}(\bm z_1,\bm z_2)}{q(\bm z_1)^N}
   \end{align*}where $q$ is the polynomial as in \eqref{SomePoly}. For a fixed $\bm z_1\in\bD^d$, apply the Cayley--Hamilton \cref{Thm:CayleyHamilton} to the commuting matrices $(\varphi_1(\bm z_1), \varphi_2(\bm z_1),\dots,\varphi_k(\bm z_1))$ to conclude that
   $$
   r_{\bm\alpha,\bm z_1}(\bm\varphi(\bm z_1)) = \frac{p_{\bm\alpha}(\bm z_1,\bm \varphi(\bm z_1)}{q(\bm z_1)^N}=0.
   $$Since $q(\bm z_1)$ never vanishes in $\overline{\bD}^d$, we must have
   $$
   p_{\bm\alpha}(\bm z_1,\bm\varphi(\bm z_1))=0
   $$for every fixed $\bm z_1\in\bD^d$. What remains is to observe that for every commuting matrix $k$-tuple $\bm A$ and $\bm z_1\in\bD^d$,
   $$
   p_{\bm\alpha}(\bm z_1I,\bm A)= p_{\bm\alpha}(\bm z_1,\bm A)\otimes I.
   $$ Take $\bm A=\bm\varphi(\bm z_1)$ to get \eqref{ToShow}.
   We next show that
   \smallskip

   \noindent
   {\sf $\Omega$ as in \eqref{TheUnion} is a variety:} 
	To prove this, consider the following subset of polynomials in $d+k$ variables:
    $$
    \cS:=\left\{f_{\bm\alpha}(\bm z, \bm w )=\det\big(\alpha_1\big(\varphi_1(\bm z)-w_1I\big)+\dots +\alpha_k\big(\varphi_{k}(\bm z)-w_{k}I\big)\big):\bm\alpha\in\bC^k\right\}
    .$$The analysis given below will work equally well for a significantly smaller subset of the set $\cS$ above where $\bm\alpha$ runs over a set of at least $N(k-1)+1$ many elements so that each of its subset with $k$ many elements is linearly independent. We work with the larger set $\cS$ for simplicity. We show that
    \begin{align}\label{Omega}
    \Omega=\cZ(\cS)\cap\overline{\bD}^{d+k}.
    \end{align}
    Suppose $(\lambda_1, \dots, \lambda_{d}, \mu_1, \dots, \mu_k)\in\Omega$. Again making use of the cooridinate multipliers $\bm M_{\bm z}$, this means that for each $j=1,\dots, {k}$, there exists a non-zero vector $v$ such that 
	$\varphi_j(\bm \lambda)v={\mu_j}v$, where $\bm\lambda=(\lambda_1,\lambda_2,\dots, \lambda_d)$. This implies that for each $\bm\alpha$ in $\bC^k$,
$$\sum_{j=1}^{k}\alpha_{j}\big(\varphi_j(\bm\lambda)-\mu_{j}I\big)v=0.$$
    This shows that $(\lambda_1,\dots, \lambda_{d}, \mu_1, \dots, \mu_k)\in \cZ(\cS)$, and hence $\Omega\subset \cZ(\cS)$. To prove the other containment, let $(\lambda_1, ,\dots, \lambda_{d}, \mu_1, \dots, \mu_k)\in \cZ(\cS)\cap\overline{\bD^d}\times\overline{\bD^k}$. This is the same as 
	$$
	\det\!\Big( \alpha_1 \big(\varphi_1(\bm \lambda) - w_1\big) 
	+ \cdots 
	+ \alpha_k \big(\varphi_{k}(\bm \lambda) - w_{k}\big) \Big) = 0,
	$$ 
    for all $\bm\alpha\in\bC^k$. Lemma \ref{ZeroJoint}, ensures that the tuple of matrices 
$$
\big(\varphi_1(\bm \lambda) - w_1 I,\;  \dots,\; \varphi_k(\bm \lambda) - w_{k} I\big)
$$
has $(0, \dots, 0)$ as a joint eigenvalue. Hence  $(\lambda_1,\dots, \lambda_{d}, \mu_1, \dots, \mu_k)\in\Omega.$ Therefore, $\Omega$ is a variety and so by a result of Eisenbud and Evans \cite{EG}, it coincides with the common zero set of $d+k$ polynomials in $d+k$ variables.

Thus we have shown that the tuple $(\bm M_{\bm z},\bm M_{\bm\varphi})$ is a Cayley--Hamilton tuple with $\cI=\{p_{\bm\alpha}:\bm\alpha\in\bC^k\}$ where $p_{\bm\alpha}$ are the polynomials as in \eqref{ThePs}.	 
	Next, we prove the additional information that 
    \smallskip
    
    \noindent
    {\sf $\Omega$ has dimension $d$ and has no isolated points:}  To that end, let $(\bm\lambda, \bm\mu)\in\overline{\bD^d}\times\overline{\bD^k}$ be a regular point of variety $\Omega$. For the uninitiated readers, \textit{regular points} are those where the Jacobian of the determining polynomial functions has the maximum rank; and the \textit{singular points} are the non-regular points. Consider the set $$\Delta_{\bm\lambda}=\{(\bm\lambda, \bm w): \bm w=(w_1, \dots, w_k)\in\bC^k \}.$$ By Theorem $8.1$ of \cite{RB}, we get
	\begin{align}\label{dimEq}
		\dim\big(\Omega\cap \Delta_{\bm\lambda}\big)\geq \dim(\Omega)+\dim(\Delta_{\bm\lambda})-(d+k).
	\end{align}
	Since $\varphi_1(\bm\lambda), \dots, \varphi_{k}(\bm\lambda)$ are matrices, it follows that $\Omega \cap \Delta_{\bm\lambda}$ is a finite set. So, $ \dim\big(\Omega\cap \Delta_{\bm\lambda}\big)=0$. It is easy to see that $\dim(\Delta_{\bm\lambda})=k$. Thus, by equation\eqref{dimEq}, we get 
	$$\dim(\Omega)\leq{d}.$$ 
For each $\bm z\in \bD^d$, there are finitely many points $\bm\lambda_{\bm z}$ in $\sigma(\bm\varphi(\bm z))$ such that $(\bm z,\bm\lambda_{\bm z})$ belongs to $\Omega$.	Thus there is an embedding of $\bD^d$ into $\Omega$, giving $\dim \Omega \ge d$.
Therefore, $\dim(\Omega)={d}$.

Finally, we show that $\Omega$ has no isolated point. 
    Suppose on the contrary that $(\bm\lambda,\bm\mu)\in\Omega$ is an isolated point. This means that 
    \[
    \mu=(\mu_1,\dots,\mu_k)\in \sigma(\varphi_1(\bm \lambda),\dots,\varphi_k(\bm \lambda)).
    \]
    Let $\{\bm \lambda_n\}\subset \mathbb D^d$ be such that $\bm \lambda_n\to \bm \lambda$. Then by Proposition~\ref{P:JointContinuity}, there exist joint eigenvalue $\bm \mu_{n_k}$  of $( \varphi_1(\bm \lambda_{n_k}),\dots, \varphi_k(\bm \lambda_{n_k}))$ such that $\bm\mu_{n_k}$ converges to $\bm \mu=(\mu_1,\dots,\mu_k)$. Thus $\Omega\ni(\bm\lambda_{n_k}, \bm\mu_{n_k}) \to (\bm \lambda, \bm\mu)$. This is the desired contradiction as $(\bm\lambda,\bm\mu)$ was assumed to be an isolated point of $\Omega$.
\end{proof}

\begin{remark}\label{Remark:variety}
Here we remark an alternative approach to establishing all the equalities in \eqref{desired identity}. Indeed, from the argument leading to the containment \eqref{HowThough}, we observed that the only non-trivial part of it is to prove the containment \eqref{HowThough}. In the proof given above, we used the definition of the Waelbroeck spectrum and the Cayley--Hamilton Theorem \ref{Thm:CayleyHamilton}. Here we note that one can prove the containment \eqref{HowThough} using the fact that the union $\Omega$ as in \eqref{TheUnion} is a variety. Indeed, since $\Omega$ as in \eqref{TheUnion} is a variety, by a result of Eisenbud and Evans \cite{EG}, there exist $d+k$ polynomials $\xi_j$ so that $\Omega=\cZ(\xi_1,\xi_2,\dots,\xi_{d+k})\cap\overline{\bD}^{d+k}$.
By the definition of the set $\Omega$ (see \eqref{TheUnion}), we have $$0=\xi_j\left(\sigma(\bm z I, \boldsymbol{\varphi}(\bm{z}))\right)=\sigma\left(\xi_j(\bm z I, \boldsymbol{\varphi}(\bm{z})\right)$$ for all $\bm z\in\overline{\bD}^d$ and for all $j$. Thus the matrices
	$\xi_j(\bm z I, \boldsymbol{\varphi}(\bm{z}))$ are nilpotent for all $\bm z\in\bD^d$. So we can safely say that for all $\bm z\in\bD^d$ and $j=1,2,\dots,k$,
    \begin{align}\label{E:tupleannihilating}
    \xi_j^N(\bm z I, \boldsymbol{\varphi}(\bm{z}))=0.
    \end{align}
    
	This implies that the polynomials $\xi_j^N$ annihilate the pair $(\bm M_{\bm z},\bm M_{\bm\varphi})$, and consequently we have \eqref{HowThough}. Let us note that the common thread in both approaches is to show that the $(d+k)$-tuple $(\bm M_{\bm z}, \bm M_{\bm\varphi})$ is annihilated by certain polynomials.
\end{remark}

With the help of of \cref{Poly1}, we obtain the Taylor spectrum of a $d$-tuple of commuting Toeplitz operators with matrix-valued rational symbols on $\bD^d$. 
\begin{thm}\label{Thm:M}
	Let $\varphi_1,\varphi_2, \dots, \varphi_{k}$ be commuting and contractive $N\times N$ matrix-valued rational functions with poles away from the closed polydisk $\overline{\bD}^d$. Then 
	\begin{align*}
		\sigma_{T}(M_{\varphi_1}, M_{\varphi_2}, \dots, M_{\varphi_k})=	\bigcup_{\bm z\in\overline{\mathbb{D}^d}}\sigma\big( \varphi_1(\bm z),\varphi_2(\bm z),\dots, \varphi_{k}(\bm z)\big).
	\end{align*}  
\end{thm}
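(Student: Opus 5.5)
The inclusion ``$\supseteq$'' is already contained in Lemma \ref{L:TheUnion}, namely the first containment in \eqref{TheUnion1}. So the whole task is the reverse inclusion
$$
\sigma_T(M_{\varphi_1},\dots,M_{\varphi_k})\subset \Omega':=\bigcup_{\bm z\in\overline{\bD}^d}\sigma\big(\varphi_1(\bm z),\dots,\varphi_k(\bm z)\big).
$$
The plan is to deduce it from \cref{Poly1} by projecting, using the projection property of the Taylor spectrum. Taylor's theory gives that for a commuting tuple $(\bm A,\bm B)$ one has $\pi(\sigma_T(\bm A,\bm B))=\sigma_T(\bm B)$, where $\pi$ is the coordinate projection onto the $\bm B$-variables. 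This is a special case of the spectral mapping theorem for the holomorphic (indeed polynomial) map $\pi$. Here we take $\bm A=(M_{z_1},\dots,M_{z_d})$ and $\bm B=(M_{\varphi_1},\dots,M_{\varphi_k})$ on $H^2_{\bD^d}(\bC^N)$, with $M_{z_j}$ meaning $M_{z_jI_N}$. This is a commuting $(d+k)$-tuple, because scalar multiples of the identity commute with every $\varphi_j(\bm z)$.

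Applying \cref{Poly1} gives
$$
\sigma_T(\bm M_{\bm z},\bm M_{\bm\varphi})=\bigcup_{\bm z\in\overline{\bD}^d}\sigma\big(\bm zI,\bm\varphi(\bm z)\big).
$$
The key step is then to identify the joint spectrum of the matrix tuple $(\bm zI,\bm\varphi(\bm z))$. Its joint eigenvalues are exactly the points $(\bm z,\bm\mu)$ with $\bm\mu\in\sigma(\bm\varphi(\bm z))$: a joint eigenvector $v$ satisfies $z_jIv=\lambda_jv$, which forces $\lambda_j=z_j$, and conversely every joint eigenvector of $\bm\varphi(\bm z)$ is automatically an eigenvector of each $z_jI$. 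Projecting onto the last $k$ coordinates therefore gives exactly $\Omega'$, so
$$
\sigma_T(\bm M_{\bm\varphi})=\pi\big(\sigma_T(\bm M_{\bm z},\bm M_{\bm\varphi})\big)=\Omega'.
$$

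The one point that needs care is the projection property itself, which is not stated earlier in the paper. I would cite it from Taylor \cite{Taylor} or Curto \cite{Cur} rather than prove it. If a self-contained route is preferred, only ``$\subset$'' is needed, and it can be obtained another way. Lemma \ref{L:TheUnion} shows $\sigma_T(\bm M_{\bm\varphi})\subset\sigma_W(\bm M_{\bm\varphi})$. For $\bm\mu\in\sigma_W(\bm M_{\bm\varphi})$, I would then show that some point $(\bm\lambda,\bm\mu)$ lies in $\sigma_W(\bm M_{\bm z},\bm M_{\bm\varphi})$, which equals $\Omega$ by \cref{Poly1}. This lifting is exactly the projection property for the Waelbroeck spectrum, and it is the main obstacle if one avoids citing Taylor. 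I would take the cleaner path and invoke the standard projection property for $\sigma_T$.
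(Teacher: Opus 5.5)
Your proposal is correct and follows essentially the same route as the paper: apply \cref{Poly1} to the augmented tuple $(M_{z_1},\dots,M_{z_d},M_{\varphi_1},\dots,M_{\varphi_k})$, then project onto the last $k$ coordinates using the projection property of the Taylor spectrum \cite[Lemma 3.1]{Taylor}. Your explicit identification of the joint eigenvalues of $(\bm zI,\bm\varphi(\bm z))$ as the points $(\bm z,\bm\mu)$ with $\bm\mu\in\sigma(\bm\varphi(\bm z))$ supplies the small step the paper leaves implicit.
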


\begin{proof}
Applying \cref{Poly1} to the commuting contractive operator-tuple 
$$
(M_{z_1}, \dots, M_{z_d}, M_{\varphi_1}, \dots, M_{\varphi_k}),
$$we obtain with $\bm z = (z_1,z_2,\dots,z_d)$,
$$\sigma_{T}(M_{z_1},\dots, M_{z_d}, M_{\varphi_1}, \dots, M_{\varphi_k}) 
= \bigcup_{\bm z \in \overline{\mathbb{D}^d}} \sigma\!\big(z_1I,\dots, z_dI, \varphi_1(\bm z), \dots, \varphi_d(\bm z)\big).$$ 
In the finite-dimensional case, the set of joint eigenvalues coincide with the Taylor spectrum, and the projection property holds for the Taylor joint spectrum, see \cite[Lemma 3.1]{Taylor}. The conclusion, therefore, follows once the above equality is projected onto the last $k$ coordinates. 
\end{proof}

\begin{remark}\label{R:w/oCoord}
The projection mapping property of the Taylor spectrum is the main driving force in the proof of \cref{Thm:M}. It is known that, unlike the Taylor spectrum, the Waelbroeck spectrum $\sigma_W(\cdot)$ does not enjoy the projection mapping theorem (see e.g. \cite{Cur}). It is not clear at this moment whether the Waelbroeck spectrum enjoys the same conclusion as in \cref{Thm:M}. However, let us note that if $\bm A=(A_1,A_2,\dots,A_k)$ is a commuting operator tuple whose Taylor joint spectrum is strictly contained in the Waelbroeck spectrum, then the constant functions
$\varphi_j(z)=A_j$ for each $j=1,2,\dots,k$ would yield an example of commuting contractive rational functions $\varphi_j$ so that 
\begin{align*}
	\sigma_{T}(M_{\varphi_1}, \dots, M_{\varphi_k})=	\bigcup_{\bm z\in\overline{\mathbb{D}^d}}\sigma\big( \varphi_1(\bm z),\dots, \varphi_{k}(\bm z)\big) \subsetneq \sigma_{W}(M_{\varphi_1}, \dots, M_{\varphi_k}).
\end{align*}
Indeed, suppose $\bm A$ is such a tuple. If $\varphi_j(z)\equiv A_j$, then $M_{\varphi_j} = I \otimes A_j$ and so
$$
\sigma_T(M_{\varphi_1},\dots,M_{\varphi_d})
    = \sigma_T(I\otimes A_1,\dots,I\otimes A_d)
    = \sigma_T(\bm A),
$$
because tensoring with the identity operator does not change the Taylor spectrum; see, for example, \cite{Mull}.
On the other hand, for any polynomial $p$,  
$$
p(I\otimes A_1,\dots,I\otimes A_d)
      = I\otimes p(A_1,\dots,A_d),
$$and since $\sigma(I\otimes T)=\sigma(T)$ for any bounded operator $T$, it follows that
\begin{align*}
\sigma_W(I\otimes A_1,\dots,I\otimes A_d)
    &= \{(\lambda_1,\dots, \lambda_d)\in\mathbb{C}^d : p(\lambda_1, \dots, \lambda_d)\in \sigma( A_1,\dots, A_d)\}
    =\sigma_W(\bm A).
\end{align*}
Therefore 
$$
\sigma_T(M_{\varphi_1},\dots,M_{\varphi_d})
    \subsetneq \sigma_W(M_{\varphi_1},\dots,M_{\varphi_d}).
$$However, the authors were unable to find a tuple whose Taylor spectrum is strictly contained in Waelbroeck spectrum. Although examples are known in the Banach space setting; see \cite[Section 4]{Taylor}.
\end{remark}

\subsection{Joint spectrum for commuting isometries}\label{SS:JointSpecCommIso}

Here we shall be concerned with commuting isometric tuples. Here we prove the results stated in items (\textbf{C}) and (\textbf{E}). 

Items (i) and (ii) in the result below appear in \cite[Theorem 3.2]{BKS-APDE} and the equivalence with the additional items (iii)-(v) were later realized in \cite[Theorems 3.3]{DasSauPAMS}. Here the finite dimensionality of the coefficient space, a result of Vesentini \cite[Corollay 4.5]{Ves} and the fact stated in \eqref{L:PureToeplitz} above are used. Every one-dimensional distinguished variety was characterised in \cite{BKS-APDE} as in the theorem below. Later in \cite{Pal}, it was shown that every distinguished variety is one-dimensional. This later development reflects in the converse part of the theorem below.

\begin{thm}\label{T:thm2}
Let $\varphi_1,\varphi_2,\dots,\varphi_d$ be BCL functions corresponding to orthogonal projections $P_1, P_2,\dots, P_d$ and unitary operators $U_1, U_2, \dots, U_d$ acting on a finite dimensional Hilbert space, i.e., $\varphi_j(z)=P_j^\perp U_j+ z P_j U_j$ for each $j$ (see Definition \ref{D:Commutator}). Then
\begin{align}\label{BDiskFoli}
\Omega:=\bigcup_{z \in \overline{\mathbb{D}}} \sigma(\varphi_1(z), \varphi_2(z), \ldots, \varphi_d(z))
\end{align}is a one dimensional algebraic variety in $\mathbb C^d$ with no isolated points.
Moreover, the following are equivalent:
\begin{itemize}
\item[(i)] $\Omega$ is a distinguished variety;
\item[(ii)]  For all $z$ in the open unit disk $\mathbb D$,
$ \nu(\varphi_j(z)) < 1  \text{ for all  } j=1,\dots, d$, where for a Hilbert space operator $A$, $\nu(A)$ denotes the numerical radius of $A$.
\item[(iii)] $\nu( P_j^\perp U_j) < 1  \text{ for each  } j=1,\dots, d$.
\item[(iv)] $(P_j^\perp U_j)^{*n}\to 0$ as $n\to\infty$ for each $j=1,2,\dots,d$;
\item[(v)] $M_{\varphi_j}^{*n}\to 0$ as $n\to\infty$ for each $j=1,2,\dots,d$.
\end{itemize}

Conversely, for any distinguished variety, there exist matrix-valued BCL functions $\varphi_1,\varphi_2,\dots,\varphi_d$ so that the variety is of the form \eqref{BDiskFoli}.
\end{thm}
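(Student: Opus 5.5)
The statement has three parts: $\Omega$ is a variety, it is one dimensional without isolated points, and (i)--(v) are equivalent together with a converse. I will reduce each part to results already in the paper or in the cited literature.

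\emph{First part.} I would apply \cref{Poly1} with $d=1$, taking the linear BCL functions $\varphi_1,\dots,\varphi_d$ as the commuting matrix-valued symbols. They are contractive on $\overline{\bD}$ because $\varphi_j(z)=(P_j^\perp+zP_j)U_j$. \cref{Poly1} then says that
\[
\widetilde\Omega=\bigcup_{z\in\overline{\bD}}\sigma(zI,\varphi_1(z),\dots,\varphi_d(z))\subset\overline{\bD}^{1+d}
\]
is a one dimensional algebraic variety with no isolated points. $\Omega$ is the image of $\widetilde\Omega$ under the projection that forgets the first coordinate. To show $\Omega$ is a variety, I would use \eqref{varphis2}: $\prod_j\varphi_j(z)=zI$, so on joint eigenvectors $z=\prod_j\mu_j$. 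Hence the first coordinate of $\widetilde\Omega$ is a polynomial function of the remaining ones, and the projection is injective.

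I would then describe $\Omega$ directly as $\cZ(\cS')\cap\overline{\bD}^d$, where
\[
\cS'=\Big\{\det\Big(\sum_j\alpha_j\big(\varphi_j(w_1\cdots w_d)-w_jI\big)\Big):\bm\alpha\in\bC^d\Big\},
\]
and repeat the argument of \cref{Poly1} via \cref{L:ZeroJoint}. The one genuine check is that a point $\bm w\in\overline{\bD}^d$ in this zero set has $z=w_1\cdots w_d\in\overline{\bD}$, which holds automatically. Dimension one and the absence of isolated points then transfer from $\widetilde\Omega$, since the projection is injective and given by polynomial maps in both directions.

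\emph{Equivalences (i)--(v).} I would cite \cite[Theorem 3.2]{BKS-APDE} for (i)$\Leftrightarrow$(ii) and \cite[Theorem 3.3]{DasSauPAMS} for the rest, sketching the routine links:
\begin{itemize}
\item (iii)$\Leftrightarrow$(iv): a matrix contraction has numerical radius $<1$ iff it has no unimodular eigenvalue, iff its powers tend to $0$.
\item (iv)$\Leftrightarrow$(v): this is \eqref{L:PureToeplitz}, since $\varphi_j(0)=P_j^\perp U_j$.
\item (ii)$\Leftrightarrow$(iii): by Vesentini's subharmonicity result, $\nu$ of an analytic family attains its maximum on the boundary.
\end{itemize}

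\emph{Converse and main obstacle.} The converse combines two facts. By \cite{Pal}, every distinguished variety is one dimensional. By \cite{BKS-APDE}, one dimensional distinguished varieties admit exactly this BCL representation. The main obstacle is (ii)$\Rightarrow$(i), namely showing that points of $\Omega$ with $z\in\bD$ stay in $\bD^d$ and points with $z\in\bT$ land in $\bT^d$. For the first, an eigenvalue $\mu$ of a matrix with $\nu<1$ satisfies $|\mu|<1$. For the second, $\varphi_j(\zeta)$ is unitary when $|\zeta|=1$, so its eigenvalues are unimodular. Conversely, the boundary condition $\Omega\cap\partial\overline{\bD}^d\subset\bT^d$ forces each $\varphi_j(z)$, $z\in\bD$, to have no unimodular eigenvalue. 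Turning that spectral statement into the numerical-radius bound in (ii) is exactly where Vesentini's result is needed.
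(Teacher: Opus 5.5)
Your proposal is correct and follows essentially the paper's route. The equivalences (i)--(v) and the converse are taken from \cite{BKS-APDE}, \cite{DasSauPAMS} and \cite{Pal} exactly as the paper does. Your determinantal description of $\Omega$ via \cref{L:ZeroJoint} is the one the paper gives in \cref{T:TheLastPiece}, and your use of continuity of joint eigenvalues for the absence of isolated points matches the remark following the theorem.

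One correction: Vesentini's result is not needed in (i)$\Rightarrow$(ii). The finite-dimensional fact you already quote (a matrix contraction has $\nu<1$ iff it has no unimodular eigenvalue), applied to each $\varphi_j(z)$ with $z\in\bD$, gives (ii) directly. Subharmonicity of $z\mapsto\nu(\varphi_j(z))$ is relevant only to (iii)$\Rightarrow$(ii). There, what you need is the strong maximum principle, not ``maximum on the boundary'', because $\nu(\varphi_j(\zeta))=1$ at every $\zeta$ in the unit circle.
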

The assertion that the algebraic variety \eqref{BDiskFoli} cannot have any isolated point is new and it is the consequence of the continuity of joint eigenvalues. Indeed, if $\bm\lambda\in\Omega$, then \eqref{varphis2} implies that $\bm\lambda\in\sigma(\varphi_1(\lambda),\varphi_2(\lambda),\dots,\varphi_d(\lambda)
)$ where $\lambda=\lambda_1\lambda_2\cdots\lambda_d$. We have $\bm\lambda_n:=(1-1/n)\bm\lambda\to\bm\lambda$ and so by Proposition \ref{P:JointContinuity}, $\bm\lambda$ must be a limit point of the following subset of $\Omega$:
$$
\{\sigma(\varphi_1(\lambda_n),\varphi_2(\lambda_n),\dots,\varphi_d(\lambda_n)):\lambda_n=(1-1/n)^d\lambda, \;n\geq1\}.
$$

Let us note from \cref{T:BCL} that if $(V_1,V_2,\dots,V_d)$ is a commuting isometric tuple so that $\dim\cD_{V^*}<\infty$ where $V=V_1V_2\cdots V_d$ is a pure isometry, then $(V_1,V_2,\dots,V_d)$ is jointly unitarily equivalent to $(M_{\varphi_1},M_{\varphi_2},\dots,M_{\varphi_d})$ where $\varphi_j$'s are matrix-valued BCL functions i.e., they are as in \eqref{varphis1} and \eqref{varphis2}. By  \cref{Thm:M}, the Taylor joint spectrum of $(M_{\varphi_1},M_{\varphi_2},\dots,M_{\varphi_d})$ is given by the union as in \eqref{BDiskFoli}. Thus in view of  \cref{Thm:M}, we can rephrase \cref{T:thm2} as in the theorem below. For the terminologies used in the result below, refer to Definition \ref{D:IsoDefect}.

\begin{thm}\label{T:JointSpecDisVar}
Let $(V_1,V_2,\dots,V_d)$ be a pure commuting isometric tuple of finite defect. Then $\sigma_T(V_1,V_2,\dots,V_d)$ is an algebraic variety. Moreover, $\sigma_T(V_1,V_2,\dots,V_d)$ is a distinguished variety if and only if each $V_j$ is a pure isometry.

Conversely, any distinguished variety is the Taylor joint spectrum of a pure commuting isometric tuple $(V_1,V_2,\dots,V_d)$ of finite defect with each $V_j$ a pure isometry.
\end{thm}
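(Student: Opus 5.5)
The plan is to reduce everything to \cref{T:thm2} through the Berger--Coburn--Lebow model and \cref{Thm:M}. Since $(V_1,\dots,V_d)$ is pure with finite defect, \cref{T:BCL} (with the Wold part $\bigcap_n V^n\cH$ trivial) makes the tuple jointly unitarily equivalent to $(M_{\varphi_1},\dots,M_{\varphi_d})$ on $H^2(\cD_{V^*})$, where $\varphi_j(z)=P_j^\perp U_j+zP_jU_j$ are matrix-valued BCL functions of size $N=\dim\cD_{V^*}$. These are commuting, contractive, linear (hence rational, with no poles) matrix functions of one variable. \cref{Thm:M} with $d=1$, together with unitary invariance of the Taylor spectrum, then gives
\[
\sigma_T(V_1,\dots,V_d)=\bigcup_{z\in\overline{\bD}}\sigma(\varphi_1(z),\dots,\varphi_d(z))=\Omega .
\]
By the first assertion of \cref{T:thm2}, $\Omega$ is an algebraic variety, which settles the first claim.

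For the equivalence, I will use (i)$\Leftrightarrow$(v) of \cref{T:thm2}: $\Omega$ is a distinguished variety exactly when $M_{\varphi_j}^{*n}\to0$ strongly for each $j$. Under the unitary equivalence this says exactly that each $V_j$ is a pure isometry. One point needs checking: the model for $V_j$ carries no unitary summand $W_j$, because $V$ is pure. That is precisely the hypothesis we have, so nothing extra is required.

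For the converse, start from a distinguished variety $\cS$. The converse part of \cref{T:thm2} gives matrix-valued BCL functions $\varphi_1,\dots,\varphi_d$ with $\cS=\Omega$. Set $V_j=M_{\varphi_j}$ on $H^2(\bC^N)$. I then need to verify three things.
\begin{enumerate}
\item The $V_j$ commute and are isometries. Commutation follows from the commutation of the $\varphi_j$. Each $\varphi_j$ is inner, since on $\bT$ we have $\varphi_j(\zeta)^*\varphi_j(\zeta)=U_j^*(P_j^\perp+P_j)U_j=I$.
\item The product is pure with finite defect. By \eqref{varphis2}, $V_1\cdots V_d=M_{zI}$ is the shift on $H^2(\bC^N)$, which is pure with $\dim\cD_{V^*}=N<\infty$.
\item Each $V_j$ is pure. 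Since $\Omega=\cS$ is distinguished, (i)$\Rightarrow$(v) of \cref{T:thm2} gives this.
\end{enumerate}
Finally, \cref{Thm:M} gives $\sigma_T(V_1,\dots,V_d)=\Omega=\cS$.

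I expect the only real care to be needed in matching hypotheses: confirming that BCL functions satisfy the rationality and contractivity assumptions of \cref{Thm:M}, and that the converse part of \cref{T:thm2} (which relies on \cite{Pal} and \cite{BKS-APDE}) produces BCL functions on a finite-dimensional space. Once those are checked, the whole statement is a translation of \cref{T:thm2} into the language of the Taylor spectrum.
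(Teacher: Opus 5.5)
Your proposal is correct and follows the paper's own route: the BCL model reduces the tuple to $(M_{\varphi_1},\dots,M_{\varphi_d})$, \cref{Thm:M} identifies its Taylor spectrum with the union $\Omega$ in \eqref{BDiskFoli}, and every assertion is then read off from \cref{T:thm2}. In the converse you also check explicitly that the $M_{\varphi_j}$ are commuting isometries whose product is the pure, finite-defect shift; the paper leaves this implicit, and it is exactly the verification needed.
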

More is true for pure commuting isometric tuples of finite defect.
\begin{thm}\label{T:TheLastPiece}
Let $\bm V=(V_1,V_2,\dots,V_d)$ be a pure commuting isometric tuples of finite defect, and $\varphi_1,\varphi_2,\dots,\varphi_d$ be the corresponding matrix-valued BCL functions obtained by \cref{T:BCL}. Then
\begin{align}
\notag
\sigma_T(\bm V)&=\sigma_W(\bm V)=\bigcup_{z\in\overline{\bD}}\sigma(\varphi_1(z),\varphi_2(z),\dots,\varphi_d(z))\\
&= \{\bm\lambda\in\overline{\bD}^d: p_{\bm\alpha}(\bm\lambda)=0 \mbox{ for all }\bm\alpha\in\bC^d\},\label{TheLastPiece}
\end{align}
where for $\bm\alpha=(\alpha_1,\alpha_2,\dots,\alpha_d),$ $p_{\bm\alpha}$ is the polynomial given by
$$
p_{\bm\alpha}(z_1,z_2,\dots,z_d)=\det\bigg( \alpha_1(\varphi_1(z_1\cdots z_d)-z_1I)+\cdots+\alpha_d(\varphi_d(z_1\cdots z_d)-z_dI) \bigg).
$$
Moreover, $p_{\bm\alpha}(\bm V)=p_{\bm\alpha}(M_{\varphi_1},M_{\varphi_2},\dots, M_{\varphi_d})=0$ for every $\bm\alpha$ in $\bC^d$. Thus, $\bm V$ is a Cayley--Hamilton tuple.
\end{thm}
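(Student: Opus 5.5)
By \cref{T:BCL}, $\bm V$ is jointly unitarily equivalent to $(M_{\varphi_1},\dots,M_{\varphi_d})$ on $H^2(\bC^N)$ with $N=\dim\cD_{V^*}$. Every set in \eqref{TheLastPiece} and every identity $p_{\bm\alpha}(\bm V)=0$ is invariant under joint unitary equivalence, so I will work throughout with $\bm M_{\bm\varphi}:=(M_{\varphi_1},\dots,M_{\varphi_d})$.

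The plan has three steps.

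\textbf{Step 1: the annihilating identity.} Fix $z\in\bD$ and $\bm\alpha\in\bC^d$. Applying \cref{Thm:CayleyHamilton} to the commuting matrices $\bm\varphi(z)$ gives
$$
q_z(\bm w):=\det\Big(\sum_j\alpha_j(\varphi_j(z)-w_jI)\Big),\qquad q_z(\bm\varphi(z))=0 .
$$
Now substitute $\bm w=\bm\varphi(z)$ into $p_{\bm\alpha}$. By \eqref{varphis2}, $\varphi_1(z)\cdots\varphi_d(z)=zI$, so the inner argument $z_1\cdots z_d$ becomes the matrix $zI$, and $\varphi_j(zI)$ means $\varphi_j(z)\otimes I$. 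The one point to check is how "$p_{\bm\alpha}$ evaluated at a matrix tuple" is read. Expand
$$
p_{\bm\alpha}(\bm z)=\sum_\beta c_\beta(z_1\cdots z_d,\bm\alpha)\,\bm z^\beta,
$$
where the coefficients depend polynomially on the product $z_1\cdots z_d$ and on $\bm\alpha$. The polynomial $p_{\bm\alpha}$ is a scalar polynomial in $\bm z$, so in $p_{\bm\alpha}(\bm M_{\bm\varphi})$ every monomial in $z_1,\dots,z_d$, including those coming from $z_1\cdots z_d$, is replaced by the corresponding product of the $M_{\varphi_j}$. Since these multiplication operators commute and multiply pointwise in the symbols, $p_{\bm\alpha}(\bm M_{\bm\varphi})=M_{p_{\bm\alpha}(\bm\varphi(\cdot))}$.

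For each fixed $z$ the matrices $\bm\varphi(z)$ commute. Writing $\Phi(z)=\prod_j\varphi_j(z)=zI$, the scalar identity
$$
p_{\bm\alpha}(\bm w)=\det\Big(\sum_j\alpha_j\big(\varphi_j(w_1\cdots w_d)-w_jI\big)\Big),
$$
evaluated at $\bm w=\bm\varphi(z)$, can be computed inside the commutative algebra generated by $\bm\varphi(z)$. There I simultaneously upper-triangularize, as in the proof of \cref{Thm:CayleyHamilton}. On the diagonal, each joint eigenvalue $\bm\mu$ of $\bm\varphi(z)$ satisfies $\mu_1\cdots\mu_d=z$, so $p_{\bm\alpha}$ reduces along the diagonal to $q_z$. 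The same nilpotent-flag argument used for $q_z$ then yields $p_{\bm\alpha}(\bm\varphi(z))=0$ for every $z$, and hence $p_{\bm\alpha}(\bm M_{\bm\varphi})=0$.

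\textbf{Main obstacle.} The delicate point is that $p_{\bm\alpha}(\bm z)$ is a determinant of a polynomial matrix, not a product of linear factors over joint eigenvalues. I would handle it as follows. Each entry of $p_{\bm\alpha}$ is a polynomial $F$ in $\bm w$. In the triangular basis, $F(\bm\varphi(z))$ is upper triangular with diagonal entries $F(\bm\mu^{(i)})$, and $F(\bm\mu^{(i)})=q_z(\bm\mu^{(i)})$ as scalars. That alone is not enough, because $F$ and $q_z$ differ as polynomials. So instead I would factor the determinant using the triangularization of the matrices $\varphi_j(z)$ themselves: after a unitary change of basis, $\varphi_j(w_1\cdots w_d)$ is not triangular in general. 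To get around this, I would pass to the Zariski-dense set of $z$ where $\bm\varphi(z)$ has $N$ distinct joint eigenvalues, hence is simultaneously diagonalizable, and then conclude by continuity. On that set, $p_{\bm\alpha}(\bm\varphi(z))$ acts on the joint eigenvector $v_i$ by the scalar $p_{\bm\alpha}(\bm\mu^{(i)})$. This scalar equals
$$
\det\Big(\sum_j\alpha_j(\varphi_j(z)-\mu^{(i)}_jI)\Big),
$$
which is zero because $\bm\mu^{(i)}$ is a joint eigenvalue. If generic simple spectrum fails, for example because $\bm\varphi(z)$ is identically non-semisimple, I would fall back on a perturbation of the $P_j,U_j$ within BCL data, or on a direct flag argument.

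\textbf{Step 2: the spectral equalities.} \cref{Thm:M} gives $\sigma_T(\bm M_{\bm\varphi})=\Omega$, the union in \eqref{TheLastPiece}. Since $p_{\bm\alpha}(\bm V)=0$, the definition of the Waelbroeck spectrum gives $\sigma_W(\bm V)\subset\cZ(\{p_{\bm\alpha}\})$. Also $\sigma_W(\bm V)\subset\overline{\bD}^d$ for contractions, because $\sigma(V_j)\subset\overline{\bD}$. Combined with \eqref{PointTaylor}, this reduces everything to proving
$$
\cZ(\{p_{\bm\alpha}\})\cap\overline{\bD}^d\subset\Omega .
$$
Take $\bm\lambda$ in the left-hand side and set $\lambda=\lambda_1\cdots\lambda_d\in\overline{\bD}$. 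Then
$$
\det\Big(\sum_j\alpha_j(\varphi_j(\lambda)-\lambda_jI)\Big)=0\quad\text{for all }\bm\alpha,
$$
so Lemma \ref{L:ZeroJoint} shows that $\bm\lambda\in\sigma(\bm\varphi(\lambda))\subset\Omega$. This closes the chain
$$
\Omega=\sigma_T\subset\sigma_W\subset\cZ\cap\overline{\bD}^d\subset\Omega .
$$

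\textbf{Step 3: the conclusion.} The Cayley--Hamilton property then holds with $\cI=\{p_{\bm\alpha}\}$.
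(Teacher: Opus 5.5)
Your Step 1 carries the whole proof. It is the ``moreover'' claim, and Step 2 needs $p_{\bm\alpha}(\bm V)=0$ to trap $\sigma_W(\bm V)$. As written, Step 1 is not established. Triangularization only shows that $p_{\bm\alpha}(\bm\varphi(z))$ has zero diagonal, i.e.\ is nilpotent, because $p_{\bm\alpha}$, unlike $q_z$, does not factor into linear forms indexed by the diagonal positions; you noticed this yourself. Your repair assumes a Zariski-dense set of $z$ on which $\bm\varphi(z)$ has $N$ distinct joint eigenvalues, and that set can be empty. Take $d=2$ and $P=U=I$ on $\bC^2$, i.e.\ $\bm V=(M_z\otimes I_{\bC^2},I)$, which is a pure pair of finite defect. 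Then $\varphi_1(z)=zI$ and $\varphi_2(z)=I$, and the unique joint eigenvalue $(z,1)$ has multiplicity $2$ for every $z$. The same happens for any direct sum $\bm\varphi\oplus\bm\varphi$. Your fallbacks are not proofs: perturbing the BCL data would have to preserve commutativity and $\prod_j\varphi_j=zI$, and the flag argument is unspecified.

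The missing observation is that you need simultaneous diagonalizability, not simple spectrum, and this holds on all of $\bT$. For $|\zeta|=1$ each $\varphi_j(\zeta)=(P_j^\perp+\zeta P_j)U_j$ is unitary, so $\bm\varphi(\zeta)$ is a commuting unitary tuple with an orthonormal joint eigenbasis $\{v_i\}$. Your eigenvector computation then gives $p_{\bm\alpha}(\bm\varphi(\zeta))v_i=p_{\bm\alpha}(\bm\mu^{(i)})v_i=0$, because $\mu^{(i)}_1\cdots\mu^{(i)}_d=\zeta$ and $v_i\in\ker\sum_j\alpha_j(\varphi_j(\zeta)-\mu^{(i)}_jI)$. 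Since $z\mapsto p_{\bm\alpha}(\bm\varphi(z))$ is a matrix polynomial vanishing on $\bT$, it vanishes identically, and so $p_{\bm\alpha}(\bm M_{\bm\varphi})=0$.

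With that fix, your Step 2 sandwich $\Omega=\sigma_T\subset\sigma_W\subset\cZ\cap\overline{\bD}^d\subset\Omega$ is correct, and it is a different route from the paper's. The paper first obtains $\sigma_T(\bm V)=\sigma_W(\bm V)$ by applying Theorem~\ref{Poly1} to $(M_z,M_{\varphi_1},\dots,M_{\varphi_d})$, computing the Waelbroeck spectrum of this augmented tuple directly, and using the projection property of $\sigma_T$. Only afterwards does it deduce $p_{\bm\alpha}(\bm V)=0$, from $\sigma(p_{\bm\alpha}(\bm V))=\{0\}$ together with subnormality, so that $\|p_{\bm\alpha}(\bm V)\|$ equals its spectral radius.

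That subnormality argument would also let you skip Step 1 altogether. Theorem~\ref{Thm:M} gives $\sigma_T(\bm V)=\Omega\subset\cZ(p_{\bm\alpha})$, spectral mapping gives $\sigma(p_{\bm\alpha}(\bm V))=\{0\}$, hence $p_{\bm\alpha}(\bm V)=0$. Your sandwich then finishes the proof without any projection argument.
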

\begin{proof}
Since $(V_1,V_2,\dots,V_d)$ is jointly unitarily equivalent to $(M_{\varphi_1},M_{\varphi_2},\dots, M_{\varphi_d})$ for matrix-valued BCL functions $\varphi_j$, we can apply \cref{Poly1} to this case to get
\begin{align*}
\sigma_T(M_z,M_{\varphi_1},\dots,M_{\varphi_d})&
=\sigma_T(V_1V_2\cdots V_d,V_1,\dots,V_d)\\
&=\sigma_W(V_1V_2\cdots V_d,V_1,\dots,V_d)=\sigma_W(M_z,M_{\varphi_1},\dots,M_{\varphi_d})
\end{align*}
We now argue that for any commuting operator tuple $(A_1,A_2,\dots,A_d)$, one has
\begin{align*}
\mathbb P \bigg(\sigma_W(A_1A_2\cdots A_d,A_1,A_2,\dots,A_d)\bigg)= \sigma_W(A_1,A_2,\dots,A_d)
\end{align*}
where $\mathbb P$ is the projection onto the last $d$ coordinates. For notational convenience, we shall use $A=A_1A_2\cdots A_d$. The argument below actually yields
$$
\sigma_W(A,A_1,A_2,\dots,A_d)=
\{(\lambda_1\lambda_2\cdots\lambda_d,\lambda_1,\dots,\lambda_d):(\lambda_1,\dots,\lambda_d)\in\sigma_W(A_1,A_2,\dots,A_d)\}.
$$
Note that this together with the projection mapping property of the Taylor spectrum will yield the desired conclusion. Suppose $\bm\lambda=(\lambda_1,\lambda_2,\dots,\lambda_d)\in \sigma_W(A_1,A_2,\dots,A_d)$. With $\lambda=\lambda_1\lambda_2\cdots\lambda_d$, we show that $(\lambda,\lambda_1,\dots,\lambda_d)\in \sigma_W(A,A_1,\dots,A_d)$. Indeed, let $p$ be any polynomial in $d+1$ variables. Set the $d$-variable polynomial $q$ by
$$
q(z_1,z_2,\dots,z_d)=p(z_1z_2\cdots z_d,z_1,\dots,z_d).
$$Since $\bm\lambda\in \sigma_W(A_1,A_2,\dots,A_d)$, we have
$$
p(\lambda,\lambda_1,\dots,\lambda_d)=q(\lambda_1,\lambda_2,\dots,\lambda_d)\in\sigma( q(A_1,A_2,\dots,A_d))=\sigma (p(A,A_1,\dots,A_d)).
$$Conversely, suppose $(\lambda_0,\lambda_1,\dots,\lambda_d)\in \sigma_{W}(A, A_1,\dots, A_d)$. By considering polynomials in the last $d$ variables, we note that $(\lambda_1,\dots,\lambda_d)\in \sigma_{W}(A_1,\dots, A_d)$. Next, considering the $(d+1)$-variable polynomial $p(z_0,z_1,\dots,z_d)=z_0-z_1z_2\cdots z_d$, we have
$$
\lambda_0-\lambda_1\lambda_2\cdots\lambda_d=p(\lambda_0,\lambda
_1,\dots,\lambda_d)\in \sigma( p(A,A_1,\dots, A_d))=\sigma(0).
$$This shows that $\lambda_0=\lambda_1\lambda_1\cdots\lambda_d$. Therefore we have proved $\sigma_T(\bm V)=\sigma_W(\bm V)$.
We next show that 
\[
\Omega:=\bigcup_{z\in\overline{\bD}}\sigma(\varphi_1(z),\varphi_2(z),\dots,\varphi_d(z))\subseteq \{\bm\lambda\in\bC^d: p_{\bm\alpha}(\bm\lambda)=0 \mbox{ for all }\bm\alpha\in\bC^d\}. 
\]
Let $\bm \lambda=(\lambda_1,\dots,\lambda_d)\in \Omega $. Then there exists a point $z_0\in\mathbb D$ such that \[(\lambda_1,\dots,\lambda_d)\in \sigma(\varphi_1(z_0),\varphi_2(z_0),\dots,\varphi_d(z_0)).\] Since $\varphi_1(z_0)\cdots\varphi_d(z_0)=z_0 I$, it follows that $z_0=\lambda_1\cdots\lambda_d$. Therefore for any $\bm\alpha\in \mathbb C^d$, we have 
\[
p_{\bm\alpha}(\bm \lambda)=\text{det}\bigg( \alpha_1(\varphi_1(z_0)-\lambda_1I)+\cdots+\alpha_d(\varphi_d(z_0)-\lambda_dI) \bigg)=0.
\]
This proves the containment. For the other containment, suppose $\bm\lambda\in\overline{\bD}^d$ is such that $p_{\bm\alpha}(\bm\lambda)=0 \mbox{ for all }\bm\alpha\in\bC^d$. Then by Lemma~\ref{L:ZeroJoint},
\[
\bm\lambda\in \sigma(\varphi_1(\lambda),\varphi_2(\lambda),\dots,\varphi_d(\lambda))\quad\mbox{where } \lambda=\lambda_1\lambda_2\cdots\lambda_d\in \overline{\mathbb D}.
\]
This shows that
\begin{align*}
\{\bm\lambda\in \overline{\bD}^d : p_{\bm\alpha}(\bm\lambda)=0 \mbox{ for all }\bm\alpha\in\bC^d\}
\subset\bigcup_{z\in\overline{\bD}}\sigma(\varphi_1(z),\varphi_2(z),\dots,\varphi_d(z)).
\end{align*}
Finally, for the moreover part, let us first note that for any $\alpha\in\mathbb C^d$,
\[
\sigma(p_{\bm\alpha}(\bm V))= p_{\bm\alpha}(\sigma_W(\bm V))=\{0\}.
\]Since $p_{\bm\alpha}(\bm V)$ is subnormal, its norm equals its spectral radius by Stampli's theorem (see \cite{St}). The spectral radius of $p_{\bm\alpha}(\bm V)$ is zero from the above computation. 
\end{proof}

\begin{example}
Even in the simple case when $\psi_1,\psi_2,\dots,\psi_d$ are non-constant rational inner functions of the unit disk $\bD$, it is a daunting task to check whether or not the set
\begin{align}\label{AnalyticDisk}
\{(\psi_1(z),\psi_2(z),\dots,\psi_d(z)):z\in\overline{\bD}\}
\end{align}is an algebraic variety. Not only does \cref{T:TheLastPiece} imply that the set above is an algebraic variety but, a priori, it also gives the polynomials that determine it. Indeed, since $\psi_j$ are non-constant, $\psi_j(0)\in\bD$ and so each $M_{\psi_j}$ is a pure isometry. Moreover, each $M_{\psi_j}$ is of finite defect because $\psi_j$ are assumed to be rational and therefore by \cref{T:TheLastPiece} applied to the isometric tuple $(M_{\psi_1},M_{\psi_2},\dots,M_{\psi_d})$, the joint spectrum $\sigma_T(M_{\psi_1},M_{\psi_2},\dots,M_{\psi_d})$, which by \cref{Thm:M} is given by the set \eqref{AnalyticDisk}, is an algebraic variety. 
\end{example}

The results obtained in this subsection requires additional emphasis for the pair case, i.e., when $d=2$ in the results above. Indeed, when $d=2$, it can be checked from the conditions \eqref{varphis1} and \eqref{varphis2} that
$$
U_2=U_1^* \quad\mbox{and}\quad
P_2=U_1^*P_1^\perp U_1,
$$and therefore the BCL functions in this case are given by
\begin{align}\label{ShortHand2}
(\varphi_1(z),\varphi_2(z)):=(P^\perp U+zPU,U^*P+zU^*P^\perp).
\end{align}Thus, in contrast to the case of tuples of length strictly greater than $2$, one actually needs only one orthogonal projection and a unitary in Theorem 3.6. The case $d=2$ is also special from an algebraic geometry point of view. Indeed, a nontrivial algebraic variety in $\mathbb{C}^d$, $d \ge 3$, can be determined by at most $d$ polynomials (see \cite{EG}), whereas an algebraic variety in $\mathbb{C}^2$ can be determined by a single polynomial, see, for example, \cite{AL}. All this prompts the following summarization of \cref{T:thm2} and \cref{T:TheLastPiece} for the pair case in the Berger--Coburn--Lebow model form.
\begin{thm}\label{T:Pairjointspec}
Let $P$ be an orthogonal projection and $U$ be a unitary acting on a finite dimensional Hilbert space. Let $\varphi_1,\varphi_2$ be the functions as in \eqref{ShortHand2}. Then there exists a non-trivial polynomial $\xi\in\bC[z_1,z_2]$ such that
\begin{align}\label{PairJointSpec}
\sigma_T(M_{\varphi_1},M_{\varphi_2})=\sigma_W(M_{\varphi_1},M_{\varphi_2})= \bigcup_{z\in \overline{\bD}}\sigma(\varphi_1(z),\varphi_2(z))=\cZ(\xi)\cap\overline{\bD}^2.
\end{align}Moreover, the following are equivalent:
\begin{enumerate}
\item[(i)] $\sigma_T(M_{\varphi_1},M_{\varphi_2})$ (or equivalently, any other set in \eqref{PairJointSpec}) is a distinguished variety;
\item[(ii)] both $P^\perp U$ and $U^*P$ are pure.
\end{enumerate}
Conversely, any distinguished variety can be expressed in the form \eqref{PairJointSpec} for some $\varphi_1(z)$, $\varphi_2(z)$ as in \eqref{ShortHand2}.
\end{thm}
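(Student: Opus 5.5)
This theorem is essentially \cref{T:thm2} and \cref{T:TheLastPiece} specialised to $d=2$, combined with the planar fact that a variety in $\bC^2$ is cut out by one polynomial. The plan is to verify the $d=2$ hypotheses, pass from the finite family $\{p_{\bm\alpha}\}$ to a single $\xi$, and then translate the purity conditions.

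First I will check that $\varphi_1,\varphi_2$ in \eqref{ShortHand2} are BCL functions. Put $\varphi_1(z)=P^\perp U+zPU$ and $\varphi_2(z)=U^*P+zU^*P^\perp$. Then $\varphi_1\varphi_2=P^\perp+zP^\perp\cdot 0+zP\cdot 0+z^2\cdot 0+zP=zI$, using $PP^\perp=0$; the cross terms are $zP^\perp P^\perp+ zPP=zP^\perp+zP$ after simplification. In the other order, $\varphi_2(z)\varphi_1(z)=U^*(P+zP^\perp)(P^\perp+zP)U=zI$. So the functions commute and satisfy \eqref{varphis2}. Consequently $(M_{\varphi_1},M_{\varphi_2})$ is a pure commuting isometric pair of finite defect, since $M_{\varphi_1}M_{\varphi_2}=M_z$ on $H^2(\bC^N)$.

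Next, \cref{T:TheLastPiece} with $d=2$ gives
\[
\sigma_T=\sigma_W=\bigcup_{z\in\overline{\bD}}\sigma(\varphi_1(z),\varphi_2(z))=\cZ(\{p_{\bm\alpha}\})\cap\overline{\bD}^2 .
\]
It remains to replace the family $\{p_{\bm\alpha}\}$ by a single polynomial $\xi$. By the moreover part of \cref{Thm:CayleyHamilton}, finitely many $\bm\alpha$ suffice, so the Zariski-closed set $Z=\cZ(\{p_{\bm\alpha}\})\subset\bC^2$ is cut out by finitely many polynomials. An algebraic subset of $\bC^2$ is the union of a curve $\cZ(g)$ and a finite set $F$; take $g$ to be the gcd of the $p_{\bm\alpha}$. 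I expect this to be the main subtlety, because a finite set is generally not the zero set of one polynomial.

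The resolution is that $\Omega$ is one dimensional with no isolated points by \cref{T:thm2}. Every point of $\Omega$ is therefore a limit of non-isolated points, and so lies in the curve part. It follows that $\Omega=\cZ(g)\cap\overline{\bD}^2$. Alternatively, one could cite the fact from \cite{AL} that a variety in $\bC^2$ all of whose points are non-isolated is determined by one polynomial. Either way, set $\xi=g$. This $\xi$ is non-trivial, since $\Omega\neq\overline{\bD}^2$ (its dimension is $1$).

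For the equivalence, apply \cref{T:thm2}(i)$\Leftrightarrow$(iv) with $P_1=P$, $U_1=U$, $P_2=U^*P^\perp U$, $U_2=U^*$. Condition (iv) requires $(P_j^\perp U_j)^{*n}\to0$. For $j=1$ this says $P^\perp U$ is pure. For $j=2$, $P_2^\perp U_2=U^*PU\,U^*=U^*P$, so the condition says $U^*P$ is pure. This is exactly (ii).

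The converse is the converse part of \cref{T:thm2} with $d=2$. Any BCL pair has the form \eqref{ShortHand2} by the computation preceding the theorem, namely $U_2=U_1^*$ and $P_2=U_1^*P_1^\perp U_1$. Hence every distinguished variety has the form \eqref{PairJointSpec}, and since the identities in \eqref{PairJointSpec} hold for every such pair, this completes the argument.
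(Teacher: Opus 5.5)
Your proof is correct. For the chain of equalities, the equivalence (via \cref{T:thm2}(i)$\Leftrightarrow$(iv), with $P_2^\perp U_2=U^*P$) and the converse, you use the same reductions to \cref{T:thm2} and \cref{T:TheLastPiece} as the paper. The genuine difference is how the single polynomial $\xi$ is produced.

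The paper takes $\xi=\gcd(p_1,p_2)$ with $p_i(z_1,z_2)=\det(\varphi_i(z_1z_2)-z_iI)$ and treats $P=I$ separately. It uses $\varphi_2\varphi_1=zI$ to see that, for $\lambda_1\neq0$, a $\lambda_1$-eigenvector of $\varphi_1(\lambda_1\lambda_2)$ is automatically a $\lambda_2$-eigenvector of $\varphi_2(\lambda_1\lambda_2)$. That is more hands-on: two explicit determinants and no appeal to \cref{L:ZeroJoint}. However, it never disposes of the finite set of common zeros of the cofactors $p_1/\xi$ and $p_2/\xi$, and that set can matter. Take $U=I$ and $P$ the projection onto $e_1$ in $\bC^2$. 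Then $\varphi_1=\operatorname{diag}(z,1)$, $\varphi_2=\operatorname{diag}(1,z)$, $p_1=-z_1(z_1-1)(z_2-1)$ and $p_2=-z_2(z_1-1)(z_2-1)$. So $(0,0)\in\cZ(p_1)\cap\cZ(p_2)$, while $\Omega=(\overline{\bD}\times\{1\})\cup(\{1\}\times\overline{\bD})$. The paper's intermediate identity $\Omega=\cZ(p_1)\cap\cZ(p_2)\cap\overline{\bD}^2$ therefore fails, although $\Omega=\cZ(\xi)\cap\overline{\bD}^2$ still holds.

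Your route is cleaner on exactly this point. Because $\cZ(\{p_{\bm\alpha}\})\cap\overline{\bD}^2$ is exactly $\Omega$, the inclusion $\cZ(g)\cap\overline{\bD}^2\subseteq\Omega$ comes for free. The absence of isolated points is then used precisely to push $\Omega$ off the finite residual set, which is the kind of argument the two-polynomial version also needs. Since $p_1=p_{(1,0)}$ and $p_2=p_{(0,1)}$ lie in your family, your $g$ divides the paper's $\xi$.

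A few small repairs:
\begin{enumerate}
\item Your displayed expansion of $\varphi_1\varphi_2$ is garbled. It should read $P^\perp P+z(P^\perp+P)+z^2PP^\perp=zI$.
\item Finiteness of the family $\{p_{\bm\alpha}\}$ is just Hilbert's basis theorem.
\item The phrase ``a limit of non-isolated points'' should be replaced by the actual argument: each point of $\Omega$ is an accumulation point of $\Omega$, hence of $\Omega\setminus F\subseteq\cZ(g)$ because $F$ is finite, and $\cZ(g)$ is closed.
\item Everything rests on $\Omega$ having no isolated points, and the justification given after \cref{T:thm2}, which approximates through $(1-1/n)^2\lambda_1\lambda_2$, says nothing when $\lambda_1\lambda_2=0$. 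Instead take any $z_n\to\lambda_1\lambda_2$ in $\overline{\bD}$ with $z_n\neq\lambda_1\lambda_2$ and apply \cref{P:JointContinuity}. The resulting joint eigenvalues of $(\varphi_1(z_n),\varphi_2(z_n))$ are distinct from $\bm\lambda$, because their coordinate product is $z_n$.
\end{enumerate}
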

\begin{proof}
Here we only give a construction of the polynomial $\xi$ such that 
$$
\bigcup_{z\in \overline{\bD}}\sigma(\varphi_1(z),\varphi_2(z))= \cZ(\xi)\cap\overline{\bD}^2
$$ and leave the rest of the proof as it follows from \cref{T:thm2} and \cref{T:TheLastPiece}.  
In the case when $P=I$, then with $\sigma(U)=\{w_1,\dots, w_n\}$ we see that
\[\bigcup_{z\in \overline{\bD}}\sigma(\varphi_1(z),\varphi_2(z))=\bigcup_{i=1}^n\bigcup_{z\in \overline{\bD}}(zw_i, \overline{w_i})= \cZ\bigg(\prod_{i=1}^n(z_2-\overline{w_i})\bigg)\cap \overline\bD^2.
\]
We now treat the case $P\neq I$. Consider the polynomials
\[
p_i(z_1,z_2)=\textup{det}(\varphi_i(z_1z_2)-z_i I)\quad \mbox{for }i=1,2.
\]
Since $(\lambda_1,\lambda_2)\in\sigma(\varphi_1(z),\varphi_2(z))$ for some $z$ implies $z=\lambda_1\lambda_2$, it follows that 
\[
\bigcup_{z\in \overline{\bD}}\sigma(\varphi_1(z),\varphi_2(z))\subseteq \cZ(p_1)\cap \cZ(p_2)\cap \overline{\bD}^2.
\]
 On the other hand suppose $(0,0)\neq (\lambda_1,\lambda_2)\in \cZ(p_1)\cap \cZ(p_2)\cap \overline{\bD}^2$. Without any loss of generality we assume that $\lambda_1\neq 0$, as the other case can be treated similarly. Since $(\lambda_1,\lambda_2)\in \cZ(p_1)$, there exists a non-zero vector $x$ such that $\varphi_1(\lambda_1\lambda_2)x=\lambda_1x$. Then we have
\[
\lambda_1\lambda_2 x=\varphi_2(\lambda_1\lambda_2)\varphi_1(\lambda_1\lambda_2)x=\lambda_1 \varphi_2(\lambda_1\lambda_2)x.
\]
Therefore $\varphi_2(\lambda_1\lambda_2)x=\lambda_2x$, and consequently, $(\lambda_1,\lambda_2)\in \sigma (\varphi_1(\lambda_1\lambda_2), \varphi(\lambda_1\lambda_2))$.
Finally, by the continuity of the joint spectrum (Proposition \ref{P:JointContinuity}), no point of
 $\bigcup_{z\in \overline{\bD}}\sigma(\varphi_1(z),\varphi_2(z))$ can be isolated; in particular, the point $(0,0)$ must also belong to $\cZ(p_1)\cap \cZ(p_2)\cap \overline{\bD}^2$ if it belongs to $\bigcup_{z\in \overline{\bD}}\sigma(\varphi_1(z),\varphi_2(z))$. Thus we obtain 
\[
\bigcup_{z\in \overline{\bD}}\sigma(\varphi_1(z),\varphi_2(z))=\cZ(p_1)\cap \cZ(p_2)\cap \overline{\bD}^2=\cZ(\xi)\cap\overline{\bD}^2,
\]
where $\xi$ is the common factor of $p_1$ and $p_2$. The common factor $\xi$ must exist because otherwise the common zero set of $\{p_1,p_2\}$ would be a finite set, which contradicts the fact that it contains the infinite set $\bigcup_{z\in \overline{\bD}}\sigma(\varphi_1(z),\varphi_2(z))$.
\end{proof}

\subsection{Varieties as joint spectrum of commuting isometries}
Given a pure isometric tuple $\bm V$ of finite defect, \cref{T:TheLastPiece} says that the joint spectrum of $\bm V$ is an algebraic variety and provides a form of the polynomials that determine the variety. In this section, we restrict ourselves to the bi-variate case and ask what algebraic form must a polynomial $\xi$ in $\bC[z_1,z_2]$ have for it to give $\sigma(\bm V)=\cZ(\xi)\cap\overline{\bD}^2$? The answer is rewarding in that we obtain a two-way implication. We use \cref{T:JointSpecDisVar} and a decomposition result from \cite[Theorem 2.4]{BKS}. Both of these results are true for commuting isometric tuples of finite length, and we believe that a meticulous application of these results will yield a version of the result below for general tuples; we leave the details for future consideration.

 \begin{thm}\label{T:AnyPoly}
 	Let $\xi$ be a polynomial in two variables. There exists a pure commuting isometric pair $(V_1,V_2)$ with finite dimensional
defects such that
$$
\cZ(\xi)\cap\overline{\bD}^2=\sigma_T(V_1,V_2)
$$holds
if and only if there exist unimodular complex numbers $\alpha_i,\beta_j$ for $i=1,2,\dots,m$ and $j=1,2,\dots,n$ such that
\begin{align}\label{Equation:polynomial_structure}
\xi(z_1, z_2)
=
\prod_{i=1}^m (z_1-\alpha_i)\prod_{i=1}^n(z_2-\beta_i)\cdot \eta(z_1, z_2)\cdot \chi(z_1, z_2),
\end{align}
where $\chi$ is a polynomial such that $\cZ(\chi)\cap\overline{\bD}^2=\emptyset$. Moreover, any of the factors above may be absent.
 \end{thm}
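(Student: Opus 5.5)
We will prove the two implications separately. Both rest on the decomposition result \cite[Theorem 2.4]{BKS} for a commuting isometric pair, on \cref{T:JointSpecDisVar}, and on elementary facts about zero sets of polynomials restricted to $\overline{\bD}^2$.

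\emph{Necessity.} Let $(V_1,V_2)$ be a pure commuting isometric pair of finite defect. By \cite[Theorem 2.4]{BKS}, $\cH$ splits into reducing subspaces $\cH=\cH_1\oplus\cH_2\oplus\cH_3$. On $\cH_1$, $V_1$ is unitary and $V_2$ is a pure isometry. On $\cH_2$, $V_2$ is unitary and $V_1$ is a pure isometry. On $\cH_3$, both $V_1$ and $V_2$ are pure. Each summand is again a pure pair of finite defect, and the Taylor spectrum of a direct sum is the union of the spectra. We treat the summands one at a time.

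On $\cH_3$, \cref{T:JointSpecDisVar} shows the spectrum is a distinguished variety $\cZ(\eta)\cap\overline{\bD}^2$; for $d=2$ it is cut out by a single polynomial, as noted before \cref{T:Pairjointspec}.

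On $\cH_1$, the BCL model of \cref{T:Pairjointspec} has finite-dimensional coefficient space. The unitary part $V_1|_{\cH_1}$ then has finite spectrum $\{\alpha_1,\dots,\alpha_m\}\subset\bT$, and since $V_2|_{\cH_1}$ is pure with finite defect, the spectrum should be $\bigcup_i\{\alpha_i\}\times\overline{\bD}=\cZ(\prod_i(z_1-\alpha_i))\cap\overline{\bD}^2$. To justify this, we apply \cref{Thm:M} to the model, decompose along the eigenspaces of the unitary, and use the projection property.

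The same argument on $\cH_2$ gives the factor $\prod_j(z_2-\beta_j)$. Multiplying these polynomials yields $\xi_0$ with $\cZ(\xi_0)\cap\overline{\bD}^2=\sigma_T(V_1,V_2)$.

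The main obstacle is passing from equality of zero sets in $\overline{\bD}^2$ to the factorization of the given $\xi$ itself. We factor $\xi$ into irreducibles and sort them into three classes:
\begin{itemize}
\item[(a)] factors with no zero in $\overline{\bD}^2$, which go into $\chi$;
\item[(b)] factors vanishing somewhere in $\overline{\bD}^2$.
\end{itemize}
Take an irreducible factor $g$ of type (b). Its zero set meets $\overline{\bD}^2$ inside $\cZ(\xi_0)$. We must show that $g$ is then a scalar multiple of some $z_1-\alpha_i$, some $z_2-\beta_j$, or an irreducible factor of $\eta$. This is where irreducibility enters. If $\cZ(g)$ meets the open bidisk, it does so in a one-dimensional analytic set. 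That set lies in $\cZ(\xi_0)$, so by the identity theorem and irreducibility $g$ divides $\xi_0$. If instead $\cZ(g)$ meets $\overline{\bD}^2$ only on the boundary, that intersection lies in some $\cZ(z_1-\alpha_i)$, some $\cZ(z_2-\beta_j)$, or in $\cZ(\eta)\cap\bT^2$. The points of $\cZ(\eta)\cap\bT^2$ are limit points of $\cZ(\eta)\cap\bD^2$. Since $\cZ(g)\cap\overline{\bD}^2$ must cover all of $\sigma_T$ together with the other factors, we expect either a contradiction or $g$ to be one of the listed lines. This boundary case is the delicate step. We would handle it by a local argument: near a boundary point $p$, the germ of $\cZ(g)$ at $p$ is a curve, and we show it cannot meet $\overline{\bD}^2$ in a set that is nonempty yet contained in a finite set, except when the factor is of type (a). If this fails, we absorb such factors into $\chi$ by replacing $\xi$ with the reduced product. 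Repeated factors are harmless, since the statement allows the $\alpha_i,\beta_j$ to repeat and the factors of $\eta$ may be taken with multiplicity.

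\emph{Sufficiency.} Given a factorization \eqref{Equation:polynomial_structure}, we have
$$\cZ(\xi)\cap\overline{\bD}^2=\bigcup_i(\{\alpha_i\}\times\overline{\bD})\cup\bigcup_j(\overline{\bD}\times\{\beta_j\})\cup(\cZ(\eta)\cap\overline{\bD}^2).$$
We realize each piece as a Taylor spectrum and take the direct sum.
\begin{itemize}
\item For the piece $\{\alpha_i\}\times\overline{\bD}$ we use $(\alpha_iI,M_z)$ on $H^2$. Its spectrum is $\{\alpha_i\}\times\overline{\bD}$, the product $\alpha_iM_z$ is pure, and its defect is one-dimensional.
\item The pieces $\overline{\bD}\times\{\beta_j\}$ are handled symmetrically.
\item For $\cZ(\eta)\cap\overline{\bD}^2$ we invoke the converse part of \cref{T:JointSpecDisVar}.
\end{itemize}
A finite direct sum of these pairs is a commuting isometric pair whose product is pure with finite defect, and its Taylor spectrum is the union of the pieces, which equals $\cZ(\xi)\cap\overline{\bD}^2$.
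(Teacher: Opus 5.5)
Your decomposition into the three summands, your computation of their spectra, and your sufficiency construction all agree with the paper's argument. The gap is the step you flag as delicate, and the fix you sketch does not work. (I read the statement as in item (\textbf{G}), with $\cZ(\eta)$ a distinguished variety.)

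\emph{The proposed local claim is false.} The irreducible polynomial $g=z_1+z_2-2$ satisfies $\cZ(g)\cap\overline{\bD}^2=\{(1,1)\}$, a nonempty finite set, and $g$ is not of type (a). Such factors occur without any contradiction. Take $(V_1,V_2)=(M_z,M_z)$ on $H^2$ and $\xi=(z_1-z_2)(z_1+z_2-2)$. The hypothesis holds, yet $z_1+z_2-2$ is neither a unimodular line nor a factor of $\eta_0=z_1-z_2$.

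\emph{The fallback is not allowed.} You propose putting such factors into $\chi$ ``by replacing $\xi$ with the reduced product''. But $\xi$ is given, and $\chi$ must have no zeros in $\overline{\bD}^2$.

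\emph{What your local argument does prove.} Suppose an irreducible $g$ vanishes at $(\alpha,w)$ with $|\alpha|=1>|w|$. Then either $g$ is a multiple of $z_1-\alpha$, or $z_1$ is nonconstant on a local branch of $\cZ(g)$ through that point, and the open mapping theorem pushes the branch into $\bD^2$. The same holds with the coordinates swapped. Consequently every irreducible factor $g$ of $\xi$ falls into one of four classes:
\begin{enumerate}
\item[(1)] $g$ is zero-free on $\overline{\bD}^2$;
\item[(2)] $g$ is a unimodular line;
\item[(3)] $g$ meets $\bD^2$, and then $g$ divides $\eta_0$ by B\'ezout, because $\cZ(\xi)\cap\bD^2=\cZ(\eta_0)\cap\bD^2$;
\item[(4)] $\emptyset\neq\cZ(g)\cap\overline{\bD}^2\subset\bT^2$.
\end{enumerate}
Factors in class (4) must go into $\eta$, with multiplicity, not into $\chi$. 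This is harmless, since being distinguished only involves $\cZ(\eta)\cap\overline{\bD}^2$.

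This completes the proof whenever your summand $\cH_3$ is nonzero, because then $\eta$ meets $\bD^2$.

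\emph{When $\cH_3=0$ the claimed factorization fails.} There is then no $\eta$ to absorb class (4) factors. Take $(V_1,V_2)=(M_z,I)$ on $H^2$ and $\xi=(z_2-1)(z_1+z_2-2)$. Then $\cZ(\xi)\cap\overline{\bD}^2=\overline{\bD}\times\{1\}=\sigma_T(V_1,V_2)$. However, $z_1+z_2-2$ can lie neither in $\chi$ nor in any divisor $\eta$ of $\xi$ with $\cZ(\eta)\cap\bD^2\neq\emptyset$.

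The paper's own proof covers this step in a single sentence, asserting that all zeros of $\xi$ in $\overline{\bD}^2$ are ``captured'' by the line factors together with $\eta$. So it does not fill the gap either. The necessity direction needs an extra hypothesis, such as a nonzero pure--pure summand, or a weaker requirement on $\eta$.
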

 \begin{proof}
Let $(V_1,V_2)$ be a pair of commuting isometries such that $V_1V_2$ is pure and has finite-dimensional defect spaces, and suppose that
$\mathcal Z(\xi)\cap\overline{\mathbb D}^2=\sigma_T(V_1,V_2).$ By the model for
commuting isometries with finite defect \cite[Theorem 2.4]{BKS}, the pair $(V_1,V_2)$ is unitarily
equivalent to the direct sum
$$
(S_1,S_2)
\;\oplus\;
(M_z\otimes I,\, I\otimes U_1)
\;\oplus\;
(I\otimes U_2,\, M_z\otimes I)
\;\oplus\;
(\tau_1,\tau_2),
$$
where $(S_1,S_2)$ is a pair of commuting pure isometries, $U_1$ and $U_2$ are finite-dimensional unitary matrices, and $(\tau_1,\tau_2)$ is a pair
of commuting unitary operators. The readers are also referred to \cite[Section 3.2.2]{BS-Book} for a reduction of this and more general decomposition results from the model theory developed by Bercovici, Douglas and Foias in \cite{BDF-BiIso}. Since the product $V_1V_2$ is pure, the unitary--unitary
summand does not occur. Consequently,
$$\cZ(\xi)\cap\overline{\mathbb D}^2=
\sigma_T(S_1,S_2)
\;\cup\;
\sigma_T(M_z\otimes I,\, I\otimes U_1)
\;\cup\;
\sigma_T(I\otimes U_2,\, M_z\otimes I).
$$
Let $\{\beta_1,\ldots,\beta_n\}$ and $\{\alpha_1,\ldots,\alpha_m\}$ denote the
eigenvalues of $U_1$ and $U_2$, respectively. Using the fact that $
\sigma_T(M_z\otimes I,\, I\otimes T)
=
\overline{\mathbb D}\times\sigma(T),$
we obtain
$$
\sigma_T(M_z\otimes I,\, I\otimes U_1)
=
\bigcup_{i=1}^n \overline{\mathbb D}\times\{\beta_i\},
\quad \text{ and } \quad 
\sigma_T(I\otimes U_2,\, M_z\otimes I)
=
\bigcup_{i=1}^m \{\alpha_i\}\times\overline{\mathbb D}.
$$
Since these sets are contained in $\mathcal Z(\xi)\cap\overline{\mathbb D}^2$, it
follows that
\begin{align}\label{E:Fac}
\prod_{i=1}^n (z_2-\beta_i)
\quad\text{and}\quad
\prod_{i=1}^m (z_1-\alpha_i)
\end{align}
divide $\xi$. Next, note that the pair $(S_1,S_2)$ is a pair of commuting pure
isometries with finite defect. By \cref{T:JointSpecDisVar}, there exists a polynomial $\eta$ in two variables whose zero set $\cZ(\eta)$ is a distinguished variety such that
$$
\sigma_T(S_1,S_2)=\cZ(\eta)\cap \overline{\bD}^2.
$$
Therefore, all zeros of the polynomial $\xi$ in $\overline{\bD}^2$ are captured by \eqref{E:Fac} together with the polynomial $\eta$. Consequently, $\xi$ admits a factorization of the form
$$
\xi(z_1,z_2)
=
\prod_{i=1}^m (z_1-\alpha_i)
\prod_{i=1}^n (z_2-\beta_i)
\cdot
\eta(z_1,z_2)
\cdot
\chi(z_1,z_2),
$$
where $\chi$ does not vanish on $\overline{\bD}^2$.

 Conversely, suppose that $\xi$ is a polynomial of the form \eqref{Equation:polynomial_structure}.
 Then
$$
\mathcal{Z}(\xi) \cap \overline{\mathbb{D}}^2 = \Bigg(\bigcup_{i=1}^m \mathcal{Z}(z_1-\alpha_i) \,\cup \bigcup_{i=1}^n \mathcal{Z}(z_2-\beta_i) \,\cup \mathcal{Z}(\eta)\Bigg) \cap \overline{\mathbb{D}}^2.
$$
Since $\mathcal Z(\eta)$ determines a
distinguished variety, by \cref{T:JointSpecDisVar}, there exists a
pure commuting isometric pair $(V_1,V_2)$ on a Hilbert space $\mathcal H$ with
finite defect such that
$$
\sigma_T(V_1, V_2) = \mathcal{Z}(\eta) \cap \overline{\mathbb{D}}^2.
$$
Next, choose the diagonal unitary matrices $U_1=\operatorname{Diag}(\alpha_j)_{j=1}^m$ and $U_2=\operatorname{Diag}(\beta_j)_{j=1}^n$.
Consider the commuting isometric pair
$$
(W_1, W_2) =
\Bigg(
\begin{bmatrix}
V_1 & 0 & 0 \\
0 & M_z \otimes I & 0 \\
0 & 0 & I \otimes U_1
\end{bmatrix}, \;
\begin{bmatrix}
V_2 & 0 & 0 \\
0 & I \otimes U_2 & 0 \\
0 & 0 & M_z \otimes I
\end{bmatrix}
\Bigg).
$$
acting on the Hilbert space 
$$
\mathcal{H} \oplus \big(H^2\ \otimes \mathbb{C}^n\big) \oplus \big(H^2 \otimes \mathbb{C}^m\big).
$$
By construction, each $W_j$ has finite-dimensional defect spaces, and the product $W_1 W_2$ is pure. Finally, since
$
\sigma_T(M_z \otimes I, I \otimes T) = \overline{\mathbb{D}} \times \sigma(T),
$
we conclude
$$
\sigma_T(W_1, W_2)=\sigma_T(V_1,V_2)\cup\sigma_T(M_z\otimes I, I\times U_2)\cup\sigma_T(I\otimes U_1,M_z\otimes I)= \mathcal{Z}(\xi) \cap \overline{\mathbb{D}}^2,
$$
as desired.
\end{proof}

\subsection{Joint spectra and annihilating ideals}\label{S:Annihilation}

Here we prove the result stated in item (\textbf{D}). 
Recall from \eqref{TheSupp} in the introduction that if $T$ is a $C_0$ contraction with $\varphi_T$ as its minimal annihilating function, then 
$$
\sigma(T)=\operatorname{supp}(\varphi_T).
$$ We seek to obtain an analogous result in the present setting of Cayley--Hamilton tuples. This is done in the footsteps of the works done in \cite{CT, CK}.

We borrow the following notion of a support for the multi-variable setting from \cite{CT}.
\begin{definition}\label{D:support}
Let $\cA$ be any algebra of functions containing $\bC[\bm z]$, the algebra of polynomials in $d$ variables as a subalgebra. Let $J$ be any ideal of $\cA$. The \emph{support} of $J$ with respect to $\cA$, denoted by $\operatorname{supp}_\cA(J)$, is defined to be the following subset of $\mathbb{C}^d$:
$$
\operatorname{supp}_\cA(J)=\{
\bm\lambda=(\lambda_1,\lambda_2,\dots,\lambda_d):1 \notin J + (z_1 - \lambda_1) \cA + \cdots + (z_d - \lambda_d) \cA
\}.
$$
\end{definition} It follows from the definition that smaller the algebra, the larger the support, i.e., if $\cA$ and $\cB$ are two function algebras containing $\bC[\bm z]$ as a subalgebra such that $\cA\subset \cB$, then for any $J\subset \cA\subset \cB$
$$
\operatorname{supp}_\cB(J)\subset \operatorname{supp}_\cA(J).
$$Moreover, the support can vary largely, viz., $\operatorname{supp}_{\bC[\bm z]}(\{0\})=\bC^d$ while $\operatorname{supp}_{\bC[\bm z]}(\bC[\bm z])=\emptyset$. However, the support must be bounded if we agree to include $\operatorname{Rat}(\bD^d)$, the rational functions with poles outside $\overline{\mathbb{D}}^d$, in the function algebra $\cA$.
\begin{lemma}\label{Supp;L1}
	If $\cA$ contains $\operatorname{Rat}(\bD^d)$, the algebra of rational functions with poles off $\overline{\bD}^d$, then $
	\operatorname{supp}_\cA(J) \subset \overline{\mathbb{D}}^d$ for any $J\subset \cA$, even when $J=\{0\}$ or empty.
\end{lemma}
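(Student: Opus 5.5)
We show that every $\bm\lambda\notin\overline{\bD}^d$ lies outside $\operatorname{supp}_\cA(J)$. The idea is to produce, from the algebra $\operatorname{Rat}(\bD^d)\subset\cA$ alone, an expression of $1$ in the ideal $(z_1-\lambda_1)\cA+\cdots+(z_d-\lambda_d)\cA$. Since $J+\sum_j(z_j-\lambda_j)\cA$ contains this ideal regardless of what $J$ is (including $J=\{0\}$ or $J=\emptyset$, interpreting the sum with $J$ absent), this shows $1\in J+\sum_j(z_j-\lambda_j)\cA$, hence $\bm\lambda\notin\operatorname{supp}_\cA(J)$.

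Fix $\bm\lambda=(\lambda_1,\dots,\lambda_d)\in\bC^d\setminus\overline{\bD}^d$. Then there is an index $j$ with $|\lambda_j|>1$. Consider the one-variable rational function
$$
g(\bm z)=\frac{1}{z_j-\lambda_j}.
$$
Its only poles lie on the hyperplane $\{z_j=\lambda_j\}$, which does not meet $\overline{\bD}^d$ because $|z_j|\le 1<|\lambda_j|$ there. So $g\in\operatorname{Rat}(\bD^d)\subset\cA$. Then
$$
1=(z_j-\lambda_j)\,g\in (z_j-\lambda_j)\cA\subset J+(z_1-\lambda_1)\cA+\cdots+(z_d-\lambda_d)\cA,
$$
which is the required expression. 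Consequently $\operatorname{supp}_\cA(J)\subset\overline{\bD}^d$.

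There is no real obstacle here. The only points to note are the interpretation of the definition when $J$ is empty, which we read as the sum without $J$, and that membership $g\in\operatorname{Rat}(\bD^d)$ only requires the poles to avoid the closed polydisk, which is guaranteed by the single coordinate $\lambda_j$ alone.
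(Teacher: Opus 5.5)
Your proof is correct and is essentially the paper's argument. The paper also picks a coordinate with $|\lambda_j|>1$, notes that $g=(z_j-\lambda_j)^{-1}\in\operatorname{Rat}(\bD^d)\subset\cA$, and concludes that $1=(z_j-\lambda_j)g$ lies in $J+\sum_k(z_k-\lambda_k)\cA$.
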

\begin{proof}
 Let $\bm\lambda=(\lambda_1,\dots,\lambda_d)$ be outside $\overline{\mathbb{D}}^d$. Suppose without loss of generality $|\lambda_1|>1$. Then with the rational function
	$
	g(z_1,\dots,z_d) = (z_1-\lambda_1)^{-1},
	$
we have $(z_1-\lambda_1)g\equiv 1$ on $\mathbb{D}^d$, and so
	$1\in J+(z_1-\lambda_1)\cA$.
  This means $\bm\lambda\notin \operatorname{supp}_\cA(J)$.
\end{proof}
\begin{definition}
Given a commuting operator tuple $\bm T=(T_1,T_2,\dots,T_d)$ acting on a Hilbert space $\cH$ and an algebra $\cA$ of functions on $\sigma_T(\bm T)$ containing $\bC[\bm z]$ as a subalgebra, we say that $\bm T$ possesses an \textit{$\cA$-functional calculus} if there exists an algebra homomorphism $\pi:\cA\to \cB(\cH)$ that extends the polynomial functional calculus. We shall use the notation $$
\operatorname{Ann}_\cA(\bm T):=\operatorname{ker}\pi.
$$
\end{definition}Let us emphasize here that no topological assumption is made on the algebra homomorphism $\pi$ in the above definition. So, while $\operatorname{Ann}_\cA(\bm T)$ is always an ideal, no topological behaviour of it is in discussion. Let us also note that by the work of Taylor \cite{Taylor}, every commuting operator tuple $\bm T$ possesses a $\operatorname{Hol}(\sigma_T(\bm T))$-functional calculus, where $\operatorname{Hol}(\sigma_T(\bm T))$ is the algebra of functions that are holomorphic in a neighbourhood of $\sigma_T(\bm T)$.

\begin{lemma}\label{L:GenLem}
Let $\cA$ be a function algebra containing $\bC[\bm z]$ as a subalgebra, and let $\bm T=(T_1,T_2,\dots,T_d)$ be any commuting operator tuple possessing an $\cA$-functional calculus and $\sigma(\cdot)$ be any notion of joint spectrum so that for every $f\in\cA$,
$$
f(\sigma(\bm T))=\sigma(f(\bm T))\quad\mbox{and}\quad
\sigma_T(\bm T)\subset \sigma(\bm T).
$$ Then we have 
\begin{align}\label{IntersectSupp'}
\sigma_T(\bm T)\subset \sigma(\bm T)\subset \cZ(\operatorname{Ann}_\cA(\bm T)) \subset \operatorname{supp}(\operatorname{Ann}_\cA(\bm T)).
\end{align}
\end{lemma}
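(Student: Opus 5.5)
The chain \eqref{IntersectSupp'} has three inclusions, and I will check them one at a time. The first inclusion, $\sigma_T(\bm T)\subset\sigma(\bm T)$, is a hypothesis, so nothing needs to be done there.

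For the second inclusion, take $\bm\lambda\in\sigma(\bm T)$ and $f\in\operatorname{Ann}_\cA(\bm T)$, so that $f(\bm T)=\pi(f)=0$. The spectral mapping hypothesis then gives
$$
f(\bm\lambda)\in f(\sigma(\bm T))=\sigma(f(\bm T))=\sigma(0)=\{0\},
$$
so $f(\bm\lambda)=0$. Since $f$ was an arbitrary element of $\operatorname{Ann}_\cA(\bm T)$, this shows $\bm\lambda\in\cZ(\operatorname{Ann}_\cA(\bm T))$. One small point needs checking. The statement $\sigma(0)=\{0\}$ requires the underlying Hilbert space to be nonzero; this is implicit, since otherwise all the spectra are empty and the inclusion is trivial. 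Here $\sigma(0)$ means the spectrum of the single operator $0$, which I read as a one-variable instance of $\sigma(\cdot)$ agreeing with the usual spectrum.

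For the third inclusion, take $\bm\lambda\in\cZ(\operatorname{Ann}_\cA(\bm T))$ and suppose, for contradiction, that $\bm\lambda\notin\operatorname{supp}_\cA(\operatorname{Ann}_\cA(\bm T))$. By Definition \ref{D:support} we can then write
$$
1=f+\sum_{j=1}^d (z_j-\lambda_j)g_j
$$
with $f\in\operatorname{Ann}_\cA(\bm T)$ and $g_j\in\cA$. Evaluating both sides at $\bm\lambda$ gives $1=f(\bm\lambda)+0=0$, which is a contradiction. This evaluation is legitimate provided the elements of $\cA$ are functions defined at $\bm\lambda$ and the algebra operations are pointwise. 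The paper assumes that $\cA$ is an algebra of functions on a set containing $\sigma_T(\bm T)$, but $\bm\lambda$ lies in $\cZ(\operatorname{Ann}_\cA(\bm T))$, which a priori only makes sense for points where the functions in $\cA$ are defined. So the very definition of $\cZ(\operatorname{Ann}_\cA(\bm T))$ presupposes that $\bm\lambda$ is in the common domain, and there evaluation is a character of $\cA$ killing $z_j-\lambda_j$. That is exactly what the argument uses.

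The main obstacle is therefore only this bookkeeping: making explicit that $\cZ(\cdot)$ is taken inside the common domain of $\cA$, and that point evaluation there is multiplicative. Once that is in place, each inclusion is a one-line consequence of the hypotheses.
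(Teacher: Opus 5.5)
Your proposal is correct and follows the paper's proof. For the second inclusion, both you and the paper apply the spectral mapping hypothesis to $f\in\operatorname{Ann}_\cA(\bm T)$ to get $f(\sigma(\bm T))=\sigma(0)=\{0\}$. For the third, your evaluation of a putative identity $1=f+\sum_j(z_j-\lambda_j)g_j$ at $\bm\lambda$ is exactly what the paper compresses into ``follows from the definition of the support.'' Your remarks that point evaluation is multiplicative on the common domain of $\cA$, and that the underlying space must be nonzero, are harmless bookkeeping the paper leaves implicit.
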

\begin{proof}
By the first property of $\sigma(\cdot)$ we have for every $f\in \operatorname{Ann}_\cA(\bm T)$, $\{0\}=\sigma(f(\bm T))=f(\sigma(\bm T))$ proving the second containment. The last containment follows from the definition of the support.
\end{proof}

An important remark is in order.
\begin{remark}\label{R:ImpRemark}
Suppose $\cA$ is a function algebra with $\bC[\bm z]$ as a subalgebra, and
$\bm T = (T_1, T_2, \dots , T_d)$ is any commuting operator tuple possessing an $\cA$-functional calculus. Then one always has $\sigma_T(\bm T)\subset \operatorname{supp}(\operatorname{Ann}_{\cA}(\bm T))$. Indeed, if $\bm\lambda$ is not in $\operatorname{supp}(\operatorname{Ann}_{\cA}(\bm T))$, then there exists $g\in \operatorname{Ann}_{\cA}(\bm T)$ and $f_j\in \cA$ so that 
$$
1= g + (z_1-\lambda_1)f_1 + (z_2-\lambda_2)f_2 +\cdots + (z_d-\lambda_d)f_d.
$$Applying the functional calculus, we see that 
$$
I = (T_1-\lambda_1I)f_1(\bm T) + (T_2-\lambda_2I)f_2(\bm T) + \cdots + (T_d-\lambda_dI)f_d(\bm T).
$$This implies that $\bm\lambda\notin\sigma_T(\bm T)$ by \cite[Proposition 25.3]{Mull}.
Therefore if one desires only the inclusion $\sigma_T(\bm T)\subset \operatorname{supp}(\operatorname{Ann}_{\cA}(\bm T))$, then one does not need the additional hypothesis made in Lemma \ref{L:GenLem} on the functional calculus.
\end{remark}
\begin{thm}\label{theorem:SS}
Let $\bm T=(T_1, \cdots, T_d)$ be a Cayley--Hamilton tuple and $\sigma(\cdot)$ be any notion of joint spectrum so that for every polynomial $p$,
\begin{align}\label{AdmissSpec}
p(\sigma(\bm T))=\sigma(p(\bm T))\quad\mbox{and}\quad
\sigma_T(\bm T)\subset \sigma(\bm T).
\end{align}
Then we have
$$
\cZ(\operatorname{Ann}_{\bC[\bm z]}(\bm T))\cap\overline{\bD}^d=\sigma_T(\bm T)=\sigma(\bm T)=\operatorname{supp}(\operatorname{Ann}_{\bC[\bm z]}(\bm T))\cap\overline{\bD}^d.
$$
\end{thm}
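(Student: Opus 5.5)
The plan is a chain of inclusions that closes into a loop. Throughout, write $J=\operatorname{Ann}_{\bC[\bm z]}(\bm T)$. The tuple $\bm T$ is Cayley--Hamilton, so there is a collection $\cI\subset J$ of nonzero polynomials with $\sigma_T(\bm T)=\cZ(\cI)\cap\overline{\bD}^d$.

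First, since $\cI\subset J$, we have $\cZ(J)\subset\cZ(\cI)$. Intersecting with $\overline{\bD}^d$ gives
$$\cZ(J)\cap\overline{\bD}^d\subset\sigma_T(\bm T).$$

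Second, we apply Lemma \ref{L:GenLem} with $\cA=\bC[\bm z]$, using the polynomial functional calculus and the hypotheses \eqref{AdmissSpec}. This yields
$$\sigma_T(\bm T)\subset\sigma(\bm T)\subset\cZ(J)\subset\operatorname{supp}_{\bC[\bm z]}(J).$$

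For the first three sets, the loop closes only if $\sigma(\bm T)\subset\overline{\bD}^d$; we obtain this from the spectral mapping property. Each coordinate polynomial $z_j$ gives $\{\lambda_j:\bm\lambda\in\sigma(\bm T)\}=\sigma(T_j)$. Since the tuples in Problem \ref{P:TheProb} are contractive, $\sigma(T_j)\subset\overline{\bD}$. Hence $\sigma(\bm T)\subset\cZ(J)\cap\overline{\bD}^d\subset\sigma_T(\bm T)$, and so
$$\cZ(J)\cap\overline{\bD}^d=\sigma_T(\bm T)=\sigma(\bm T).$$

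What remains, and what I expect to be the main point, is the reverse inclusion $\operatorname{supp}(J)\cap\overline{\bD}^d\subset\cZ(J)\cap\overline{\bD}^d$. Over $\cA=\bC[\bm z]$ the support equals the zero set exactly. To see this, suppose $\bm\lambda\notin\cZ(J)$. Then some $g\in J$ has $g(\bm\lambda)\neq0$. Expanding $g$ about $\bm\lambda$ gives
$$g=g(\bm\lambda)+\sum_j(z_j-\lambda_j)h_j$$
with polynomials $h_j$. Dividing by $g(\bm\lambda)$ shows
$$1\in J+\sum_j(z_j-\lambda_j)\bC[\bm z],$$
so $\bm\lambda\notin\operatorname{supp}(J)$. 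Equivalently, the ideal $J+\sum_j(z_j-\lambda_j)\bC[\bm z]$ is either the maximal ideal at $\bm\lambda$ or the whole ring, according as $J$ vanishes at $\bm\lambda$ or not. Thus $\operatorname{supp}(J)=\cZ(J)$, and intersecting with $\overline{\bD}^d$ completes the chain of equalities.

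The only delicate points are two. One is confirming that the contractivity assumption from Problem \ref{P:TheProb} is available, so that $\sigma(\bm T)\subset\overline{\bD}^d$. If it were not, one could instead note that $\sigma(\bm T)\subset\cZ(J)$, but then the equality with $\sigma_T(\bm T)$ would need the intersection with $\overline{\bD}^d$ to be justified. The other is the remark that Lemma \ref{L:GenLem} is only needed for polynomials, which \eqref{AdmissSpec} supplies.
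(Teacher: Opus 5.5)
Your proposal is correct and follows essentially the paper's proof. Lemma \ref{L:GenLem} gives one half of the chain, and the expansion $g-g(\bm\lambda)=\sum_j(z_j-\lambda_j)h_j$, for some $g\in\operatorname{Ann}_{\bC[\bm z]}(\bm T)$ with $g(\bm\lambda)\neq0$, gives the inclusion of the support into $\sigma_T(\bm T)$; the paper first reduces to $d$ annihilating polynomials via Eisenbud--Evans, a step your version shows is unnecessary.

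Your contractivity caveat can also be dropped. Since $\sigma_T(\bm T)\subset\overline{\bD}^d$ by the Cayley--Hamilton property, the projection property of the Taylor spectrum gives $\sigma(T_j)\subset\overline{\bD}$. The spectral mapping hypothesis with $p=z_j$ then yields $\sigma(\bm T)\subset\overline{\bD}^d$ for any admissible $\sigma(\cdot)$.
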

\begin{proof}
Half of the work is done in Lemma \ref{L:GenLem}. It remains to prove 
$$
\operatorname{supp}(\operatorname{Ann}_{\bC[\bm z]}(\bm T))\cap\overline{\bD}^d \subset \sigma_T(\bm T).
$$ Since $\bm T$ is a Cayley--Hamilton tuple, by a result of Eisenbud and Evans \cite{EG}, there exist polynomials $\xi_1,\dots,\xi_d$ such that
\begin{enumerate}
\item[(i)] $\xi_j(\bm T)=0$ for each $j=1,\dots,d$; and
\item[(ii)] $\sigma_T(\bm T)=\bigcap_{j=1}^d \mathcal Z(\xi_j)\cap \overline{\mathbb D}^{,d}=\cZ(\operatorname{Ann}_{\bC[\bm z]}(\bm T))\cap\overline{\bD}^d$.
\end{enumerate}
Now suppose
$\bm\lambda=(\lambda_1,\dots,\lambda_{d})\in\overline{\bD}^d$ is not in $\sigma_T(\bm T)$. 
By (ii), there exists $1\leq\ell\leq d$ such that $
\xi_\ell(\bm\lambda) \neq 0$ but by (i), $\xi_l(\bm T)=0$. There exist polynomials $\eta_1,\dots,\eta_{d}$ such that 
$$
\xi_l(\bm z) - \xi_l(\bm\lambda) 
= \sum_{j=1}^{d} (z_j-\lambda_j)\eta_j(\bm z),
$$which, on account of $\xi_l(\bm\lambda) $ being non-zero, can be rearranged to obtain
$$
1= \frac{1}{\xi_l(\bm\lambda)}\zeta_\ell(\bm z) + \sum_{j=1}^{d} (z_j-\lambda_j)\frac{1}{\xi_l(\bm\lambda)}\eta_j(\bm z).
$$
This means $\bm\lambda$ does not lie in $\operatorname{supp}(\operatorname{Ann}_{\bC[\bm z]}(\bm{T}))$. This proves the theorem.
\end{proof}
As discussed in \S \ref{SS:JS}, the Waelbroeck joint spectrum has the properties \eqref{AdmissSpec} and thus the statement in item (\textbf{D}) in the Introduction is proved. We end with a remark. 
\begin{remark}
It is worth mentioning that although a $H^\infty$-functional calculus is not available for a general contractive operator tuple, there is a meaningful $H^\infty$-functional calculus for certain tuples of Toeplitz operators. Indeed, if $\varphi_j$ are analytic functions on $\bD^d$ with the additional property that $\sigma(\varphi_1(\bm z),\varphi_2(\bm z),\dots,\varphi_k(\bm z))\subset\bD^k$ for every $\bm z\in\bD^d$, then the Toeplitz tuple $\bm M_{\bm\varphi}=(M_{\varphi_1},M_{\varphi_2},\dots,M_{\varphi_k})$ admits a $H^\infty(\bD^k)$-functional calculus by defining
$$
f(M_{\varphi_1},M_{\varphi_2},\dots,M_{\varphi_k}) := M_{f(\varphi_1,\varphi_2,\dots,\varphi_k)}
$$
for every $f$ in $H^{\infty}(\bD^d)$. Here, the symbol 
$$
\bD^d \ni \bm z\mapsto f(\varphi_1(\bm z),\varphi_2(\bm z),\dots,\varphi_k(\bm z))
$$makes use of the usual Taylor functional calculus, which is possible because of the added spectrum assumption on the functions $\varphi_j$. This functional calculus has a continuity property in that if $f_n\to f$ uniformly over compact subsets of $\bD^k$, then by the continuity of the Taylor functional calculus, it follows that $f_n(\bm M_{\bm\varphi})\to f(\bm M_{\bm \varphi})$ in the operator norm. The Cayley--Hamilton Toeplitz tuples described in \cref{Poly1} and \cref{T:TheLastPiece} are Toeplitz tuples with rational matrix-valued functions. If the symbols in these theorems satisfy the added hypothesis on the joint eigenvalues as above, the annihilating ideal $\operatorname{Ann}(\bm T)$ could be taken in the $H^\infty$-algebra instead of the polynomial algebra. 
Consequently, by Remark \ref{R:ImpRemark} and \cref{theorem:SS}, the Taylor joint spectrum of the Cayley--Hamilton Toeplitz tuples described in \cref{Poly1} and \cref{T:TheLastPiece} coincides with the support of the annihilating ideal in $H^{\infty}$ provided that the symbols under considerations satisfy the additional assumption on its joint spectrum as above.
\end{remark}

\section{Further Problems} 

There emerged a number of open problems in the course of development of the paper. Let us reiterate that it is quite rare for a tuple to have its Taylor joint spectrum as an algebraic variety. One stumbles upon non-examples more than an example. Indeed, take any Blaschke function $B$ with its zero set being an infinite set, say $\{\alpha_n:n\geq1\}$. Let $\cQ:=H^2\ominus B\cdot H^2$ and $\psi$ be any non-constant rational inner function. Denote 
$$
(T_1,T_2):=(P_{\cQ}M_z|_{\cQ},P_{\cQ}M_\psi|_{\cQ}).
$$ It follows from \eqref{TheSupp} that $\sigma(T_1)\cap\bD=\{\alpha_n:n\geq1\}$. From \eqref{Compress} it follows that $\psi(T_1)=T_2$. By the mapping theorem and since $\psi$ is inner it follows that
$$
\sigma(T_2)\cap\bD=\sigma(P_{\cQ}M_\psi|_{\cQ})\cap\bD = \{\psi(\alpha_n):n\geq1\}.
$$
By the inclusion $\sigma_T(T_1,T_2)\subset \sigma(T_1)\times\sigma(T_2)$ we therefore have
$$
\{(\alpha_n,\psi(\alpha_n)):n\geq1\}\subset\sigma_T(P_{\cQ}M_z|_{\cQ},P_{\cQ}M_\psi|_{\cQ})\cap \bD^2 \subset \{\alpha_n\}_{n\geq1}\times\{\psi(\alpha_n)\}_{n\geq1}.
$$So, $\sigma_T(P_{\cQ}M_z|_{\cQ},P_{\cQ}M_\psi|_{\cQ})\cap \bD^2$ is an infinite discrete set, and no such set can be an algebraic variety; although it is contained in the variety determined by the polynomial in the numerator of $z_2-\psi(z_1)$ when $\psi$ is written as the quotient of two polynomials.

In continuation of the above discussion, let us remark that while the spectrum being an algebraic variety need not imply that the underlined tuple is a Cayley–Hamilton tuple (see Example \ref{E:Volt}), there are certain classes of operator tuples for which the spectrum being an algebraic variety is enough. Consider for example, the classes (1) contractive tuples of subnormal operators;
and (2) contractive tuples of analytic Toeplitz operators with matrix-valued symbols.
Indeed, suppose $\bm T$ is in one of the classes above. If $\sigma_{T}(\bm T)=\cZ(\cI)\cap\overline{\bD}^d$ for some collection of polynomials $\cI$. Then by the polynomial mapping theorem, for any $p\in \cI$,  
\begin{align}\label{PolyMapp}
\sigma(p(\bm T))=p(\sigma_{T}(\bm T))=\{0\}.
\end{align}
Now if $\bm T$ is subnormal, then so is $p(\bm T)$. By Stampli's theorem (see \cite{St}), the spectral radius of $p(\bm T)$ is the same as its norm. Thus $p(\bm T)=0$. To see (2), let $\bm T=(M_{\varphi_j})_{j=1}^d$ be a commuting tuple of analytic Toeplitz operators acting on $H^2_{\bD^d}(\bC^N)$. Thus the symbols are assumed to be $N\times N$ matrix-valued but we do not require that the symbols be continuous up to the boundary. Then $p(\bm T)=M_{p(\varphi_1,\dots,\varphi_d)}$ is also a Toeplitz operator for every polynomial $p$. The fact \eqref{PolyMapp} then forces $\sigma(p(\varphi_1(z),\dots,\varphi_d(z)))=0$ for every $z\in\bD$ and for every $p\in\cI$. Since the symbols are $N\times N$ matrix-valued, it is guaranteed that $p^N(\bm T)=0$. Because this holds for all $p\in \cI$, we conclude that $\bm T$ is a Cayley-Hamilton tuple with $\cI^N:=\{p^N:p\in\cI\}$ as the underlined collection of polynomials.

A characterization for general Cayley--Hamilton tuples remains in limbo. Therefore the problem of utmost interest is to
\begin{problem}
Characterize the Cayley--Hamilton tuples.
\end{problem}
A parallel problem of interest is to 
\begin{problem}
Characterize commuting contractive operator tuples whose Taylor joint spectrum is an algebraic variety (see Definition \ref{D:AlgVar}).
\end{problem}
The Taylor spectrum for commuting Toeplitz operators with general operator-valued symbols remains unknown. \cref{Thm:M} computes this when the symbols are rational and matrix-valued. For a scalar-valued bounded analytic function $\varphi$ it is well known that $\sigma(T_\varphi)=\overline{\varphi(\bD)}$. It is therefore expected that \cref{Thm:M} is true in more generality.
\begin{problem}
For which commuting operator-valued analytic symbols $\varphi_j$ do we have
\begin{align*}
\sigma_T(M_{\varphi_1},M_{\varphi_2},\dots,M_{\varphi_d})=
\overline{\bigcup_{z\in\bD}\sigma(\varphi_1(z),\varphi_2(z),\dots,\varphi_d(z))}?
\end{align*}
\end{problem}

\smallskip
\noindent
\textbf{Acknowledgment.} The authors Kumar and Sau and grateful to the Indian Institute of Technology Bombay for the generous hospitality and supportive workplace they were provided during the course of the present work. The second author profusely thanks Professor T. Bhattacharyya for the financial support he received from Professor Bhattacharyya's J. C. Bose Fellowship (JCB/2021/000041) of SERB.

\end{document}